\newenvironment{notation}{\vspace{-\lastskip}\par\addvspace{\thmvspace}\begin{conv}}{\end{conv}\par\addvspace{\thmvspace}}
\newtheorem{conv}[thm]{Notation}
\newcommand{\FOL}{{\mathsf{FOL}}}
\newcommand{\HPL}{{\mathsf{HPL}}}
\newcommand{\HFOL}{{\mathsf{HFOL}}}
\newcommand{\HFOLS}{{\mathsf{HFOLS}}}
\newcommand{\RFOHL}{{\mathsf{RFOHL}}}
\newcommand{\Sig}{\mathtt{Sig}}
\newcommand{\Mod}{\mathtt{Mod}}
\newcommand{\Sen}{\mathtt{Sen}}
\newcommand{\Set}{\mathbb{S}et}
\newcommand{\Cat}{\mathbb{C}at}
\newcommand{\nom}{\mathtt{n}}
\newcommand{\rigid}{\mathtt{r}}
\newcommand{\flex}{\mathtt{f}}
\newcommand{\rt}{\mathtt{rt}}
\newcommand{\card}{\mathtt{card}}
\newcommand{\ext}{\mathtt{e}}
\newcommand{\ari}{\mathtt{ar}}
\newcommand{\inj}{\mathtt{inj}}
\newcommand{\red}{\!\upharpoonright\!}
\newcommand{\at}[1]{@_{#1}\,}
\newcommand{\pos}[1]{\langle #1 \rangle}
\newcommand{\store}[1]{{\downarrow}#1\,{\cdot}\,}
\newcommand{\Forall}[1]{\forall #1\,{\cdot}\,}
\newcommand{\Exists}[1]{\exists #1\,{\cdot}\,}
\begin{document}
\begin{frontmatter}
  \title{Robinson consistency in many-sorted hybrid first-order logics}
  \author{Daniel G\u{a}in\u{a}}\footnote{The work presented in this paper has been partially supported by Japan Society for the Promotion of Science, grant number 20K03718.}
  \address{Kyushu University}
  
  \author{Guillermo Badia}
  \address{University of Queensland}
  
  \author{Tomasz Kowalski}
  \address{Jagiellonian University}

\begin{abstract}
In this paper we prove a Robinson consistency theorem for a class of many-sorted hybrid logics as a consequence of an  Omitting Types Theorem.
An important corollary of this result is an interpolation theorem.
\end{abstract}

  \begin{keyword}
Institution, Hybrid logic, Robinson consistency, Interpolation.
  \end{keyword}
 \end{frontmatter}
\section{Introduction}

Robinson's Joint Consistency Theorem \cite{rob} gives a sufficient condition in
the context of first-order logic for two theories to have a common model.  This
result was originally proved by A. Robinson  with the aim of providing a new
purely model-theoretic proof of the Beth definability property. Robinson's
theorem was a historical forerunner of Craig's celebrated interpolation
theorem, to which it is famously classically equivalent. 
In the context of
classical first-order logic, it is known since Lindstr\"om's work~\cite{lind78}
that in the presence of compactness, the Robinson Consistency Property is a
consequence of the Omitting Types Theorem. Following in Lindstr\"om's footsteps,
we use an Omitting Types Theorem for many-sorted hybrid-dynamic first-order logics established
in~\cite{gai-hott} to obtain a Robinson Consistency Theorem.
However, our results rely on compactness, so they apply to any star-free fragment of this logic.

In \cite{ArecesBM01}, Areces et al. solved the interpolation problem positively for hybrid
propositional logic, and in \cite{ArecesBM03}, they establish a similar result for
hybrid predicate logic with constant domains (also called here rigid domains).
Our results, although similar, do not follow
from theirs. For one thing, the framework of \cite{ArecesBM03} is limited to
constant domain quantification, whereas we allow variable domains. Moreover,
as usual in the area of algebraic specification, we work in a many-sorted
setting and, importantly, we consider arbitrary pushouts of signatures (see,
e.g.,~\cite{tar-bit}); not only inclusions. This seemingly small change
splits one-sorted interpolation and many-sorted interpolation apart. One can have
the former but not the latter, as Example \ref{ex:counter-2} shows. The same
holds for Robinson consistency. 

Our approach is based on institution theory, 
an abstract framework introduced in~\cite{gog-ins} for reasoning about properties of logical systems from a meta-perspective.
Institutional setting achieves generality appropriate for the development of
abstract model theory, yet it is geared towards applications, particularly
applications to specification and verification of systems. 
However, for the sake of simplicity,  we work in a concrete example of hybrid logic, 
applying the modularization principles advanced by institution theory, which have at the core the notion of signature morphism and the satisfaction condition (the truth is invariant w.r.t. change of notation).
This brings about certain peculiarities, such as regarding variables as special constants,
and omnipresence of signature morphisms, which are simply maps between signatures. 
For more on institution theory, we refer the reader to~\cite{dia-book}.

The article is structured in the usual way. Section~\ref{sec:HFOL} introduces
Hybrid First-Order Logic (HFOL) with rigid symbols in an institutional setting, giving
the expected definitions of Kripke structures, reducts, and (local and global)
satisfaction relation.  Sections~\ref{sec:basics} and~\ref{sec:relat} give some
preliminary results, most importantly the Lifting Lemma~\ref{lemma:lifting} and
Proposition~\ref{prop:amalg}. The bulk of the work is in Section~\ref{sec:rc}
where, unsurprisingly, we prove Robinson Joint Consistency Theorem for
HFOL. Our proof is modelled after Lindstr\"om~\cite{lind78}, and uses
our earlier result from~\cite{gai-hott} establishing Omitting Types Theorem
for HFOL.

\section{Hybrid First-Order Logic with rigid symbols ($\HFOL$)}\label{sec:HFOL}
In this section, we present hybrid first-order logic with rigid symbols.

\paragraph{Signatures}
The signatures are of the form $\Delta=(\Sigma^\nom,\Sigma^\rigid,\Sigma)$:
\begin{itemize}
\item $\Sigma=(S,F,P)$ is a many-sorted first-order signature such that
(a)~$S$ is a set of sorts, 
(b)~$F$ is a set of function symbols of the form $\sigma:\ari\to s$, where $\ari\in S^*$ is called the \emph{arity} of $\sigma$ and $s\in S$ is called the \emph{sort} of $\sigma$, and
(c)~$P$ is a set of relation symbols of the form $\pi:\ari$, where $\ari\in S^*$ is called the arity of $\pi$.~\footnote{$S^*$ denotes the set of all strings with elements from $S$.} 
\item $\Sigma^\rigid=(S^\rigid,F^\rigid,P^\rigid)$ is a many-sorted first-order
signature of \emph{rigid} symbols such that $\Sigma^\rigid\subseteq \Sigma$.
\item $\Sigma^\nom=(S^\nom,F^\nom,P^\nom)$ is a single-sorted first-order signature such that
$S^\nom=\{\nom\}$, 
$F^\nom$ is a set of constants called \emph{nominals}, and 
$P^\nom$ is a set of unary or binary relation symbols called \emph{modalities}.
\end{itemize}
We usually write  $\Delta=(\Sigma^\nom,\Sigma^\rigid\subseteq \Sigma)$ rather than $\Delta=(\Sigma^\nom,\Sigma^\rigid,\Sigma)$.
Throughout this paper, we let $\Delta$ and $\Delta^i$ 
range over signatures of the form $(\Sigma^\nom,\Sigma^\rigid\subseteq\Sigma)$ and $(\Sigma_i^\nom,\Sigma_i^\rigid\subseteq\Sigma_i)$, respectively.
A \emph{signature morphism} $\chi \colon \Delta \to \Delta^1$ consists of a pair of first-order signature morphisms
$\chi^{\nom} \colon \Sigma^{\nom} \to \Sigma_1^{\nom}$ and $\chi \colon \Sigma \to \Sigma_1$ such that $\chi(\Sigma^{\rigid}) \subseteq \Sigma_1^{\rigid}$.~\footnote{
A first-order signature morphism $\chi:(S,F,P)\to(S_1,F_1,P_1)$, is a triple $(\chi:S\to S_1, \chi:F\to F_1,\chi:P\to P_1)$ which maps each function symbol $\sigma:s_1\dots s_n\to s\in F$ to $\chi(\sigma):\chi(s_1)\dots \chi(s_n)\to\chi(s)\in F_1$ and each relation symbol $\pi:\ari\in P$ to $\chi(\pi):\chi(\ari)\in P_1$.}

\begin{fact}
$\HFOL$ signature morphisms form a category $\Sig^\HFOL$ under the component-wise composition as first-order signature morphisms.
\end{fact}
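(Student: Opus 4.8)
The plan is to verify the standard category axioms, reducing almost everything to the corresponding facts for first-order signatures. First I would take for granted that many-sorted first-order signatures form a category $\Sig^\FOL$, with morphisms composed component-wise on sorts, function symbols and relation symbols: associativity and the identity laws hold there because each component is an ordinary function between sets, and set-theoretic composition is associative with identity functions as units. Since the nominal signatures $\Sigma^\nom$ are just single-sorted first-order signatures, their morphisms $\chi^\nom$ live in the same category, so both coordinates of an $\HFOL$ morphism are governed by $\Sig^\FOL$.

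Next I would spell out the structure. Composition of $\HFOL$ morphisms is defined component-wise: given $\chi \colon \Delta \to \Delta^1$ and $\psi \colon \Delta^1 \to \Delta^2$, set $\psi \circ \chi := (\psi^\nom \circ \chi^\nom,\ \psi \circ \chi)$, where each coordinate is composed in $\Sig^\FOL$; the identity on $\Delta$ is $(1_{\Sigma^\nom}, 1_\Sigma)$.

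The one point genuinely specific to $\HFOL$, and the main thing to check, is that composition is \emph{well-defined}, i.e. that the composite again respects rigidity. From the constraints $\chi(\Sigma^\rigid) \subseteq \Sigma_1^\rigid$ and $\psi(\Sigma_1^\rigid) \subseteq \Sigma_2^\rigid$, monotonicity of the direct image under these inclusions yields
\[
(\psi \circ \chi)(\Sigma^\rigid) = \psi(\chi(\Sigma^\rigid)) \subseteq \psi(\Sigma_1^\rigid) \subseteq \Sigma_2^\rigid,
\]
so the composite is a legitimate $\HFOL$ morphism. The identity trivially satisfies $1_\Sigma(\Sigma^\rigid) = \Sigma^\rigid \subseteq \Sigma^\rigid$, hence it is a legal morphism as well.

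Finally, associativity and the unit laws transfer coordinatewise from $\Sig^\FOL$: since both composition and identities in $\Sig^\HFOL$ are computed separately in the nominal and first-order coordinates, and each coordinate already satisfies these laws in $\Sig^\FOL$, the laws hold for the pairs. The only real obstacle is the well-definedness step above, and even that reduces to the one-line chain of inclusions just displayed.
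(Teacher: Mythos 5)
Your proof is correct and matches what the paper intends: the fact is stated without proof precisely because the verification is the routine one you give, namely component-wise composition inherited from first-order signature morphisms, with the only $\HFOL$-specific point being that the rigidity constraint $\chi(\Sigma^\rigid)\subseteq\Sigma_1^\rigid$ is preserved under composition, which your one-line inclusion chain establishes. Nothing is missing.
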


\paragraph{Kripke structures}
  For every signature $\Delta$, the class of Kripke structures over $\Delta$
  consists of pairs $(W,M)$, where
  
\begin{itemize}
\item $W$ is a first-order structure over $\Sigma^\nom$, called a \emph{frame},
with the universe $|W|$ consisting of a non-empty set of possible worlds, and
  
\item $M\colon|W|\to |\Mod^\FOL(\Sigma)|$ is a mapping from the universe of $W$ to the class of first-order $\Sigma$-structures such that the rigid symbols are interpreted in the same way across worlds:
${M_{w_1}\red_{\Sigma^\rigid}}={M_{w_2}\red_{\Sigma^\rigid}}$ for all $w_1,w_2\in |W|$, where
 $M_{w_i}$ denotes the first-order $\Sigma$-structure corresponding to $w_i$, and 
 $M_{w_i}\red_{\Sigma^\rigid}$ is the reduct of $M_{w_i}$ to the signature $\Sigma^\rigid$.
\end{itemize}
A \emph{homomorphism} $h \colon (W, M) \to (V,N)$ over a signature $\Delta$ is also a pair 
\begin{center}
$(W\stackrel{h}\to V, \{M_{w}\stackrel{h_{w}}\to N_{h(w)}\}_{w \in |W|})$
\end{center}
consisting of first-order homomorphisms such that the mappings corresponding to rigid sorts are shared across the worlds, that is, $h_{w_{1}, s} = h_{w_{2}, s}$ for all possible worlds $w_{1}, w_{2} \in |W|$ and all rigid sorts $s \in S^\rigid$.

\begin{fact}
For any signature $\Delta$, the $\Delta$-homomorphisms form a category $\Mod^\HFOL(\Delta)$ under the component-wise composition.
\end{fact}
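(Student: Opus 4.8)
The plan is to reduce everything to the fact that first-order structures over a fixed signature already form a category, namely $\Mod^\FOL(\Sigma)$ for the world components and $\Mod^\FOL(\Sigma^\nom)$ for the frames, so that the category structure on $\Mod^\HFOL(\Delta)$ is assembled componentwise and the only nonroutine point is checking that the rigidity constraint is respected. First I would fix the data: a $\Delta$-homomorphism $h\colon(W,M)\to(V,N)$ is a pair consisting of a frame homomorphism $h\colon W\to V$ in $\Mod^\FOL(\Sigma^\nom)$ together with a family $\{h_w\colon M_w\to N_{h(w)}\}_{w\in|W|}$ in $\Mod^\FOL(\Sigma)$ satisfying $h_{w_1,s}=h_{w_2,s}$ for all $w_1,w_2\in|W|$ and all rigid sorts $s\in S^\rigid$.

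Next I would define the operations. Given $h\colon(W,M)\to(V,N)$ and $g\colon(V,N)\to(U,P)$, set the frame part of $g\circ h$ to be the composite $W\to U$ in $\Mod^\FOL(\Sigma^\nom)$ and, for each $w\in|W|$, set $(g\circ h)_w := g_{h(w)}\circ h_w\colon M_w\to P_{g(h(w))}$, which is well-typed and a $\Sigma$-homomorphism because $\Mod^\FOL(\Sigma)$ is closed under composition. The identity $\mathrm{id}_{(W,M)}$ is the pair $(\mathrm{id}_W,\{\mathrm{id}_{M_w}\}_{w\in|W|})$.

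The one condition requiring verification is that these operations stay inside $\Mod^\HFOL(\Delta)$, i.e.\ preserve the rigidity constraint. For composition and a rigid sort $s$, I would compute $(g\circ h)_{w,s}=g_{h(w),s}\circ h_{w,s}$ and then observe that $h_{w_1,s}=h_{w_2,s}$ holds because $h$ is a homomorphism, while $g_{h(w_1),s}=g_{h(w_2),s}$ holds because $g$ is a homomorphism, its rigid components being shared across all worlds of $V$, in particular at $h(w_1)$ and $h(w_2)$; hence $(g\circ h)_{w_1,s}=(g\circ h)_{w_2,s}$. For the identity, $(\mathrm{id}_{(W,M)})_{w,s}$ is the identity on the carrier $M_{w,s}$, which is independent of $w$ precisely because $M_{w_1}\red_{\Sigma^\rigid}=M_{w_2}\red_{\Sigma^\rigid}$; so the constraint holds trivially.

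Finally, associativity and the unit laws transfer componentwise: on the frame part they are exactly the corresponding laws in $\Mod^\FOL(\Sigma^\nom)$, and on each world $w$ they follow from associativity and unitality of composition in $\Mod^\FOL(\Sigma)$. I expect no genuine obstacle; the only point needing an argument rather than pure bookkeeping is the closure of the rigidity constraint under composition, and that reduces immediately to the two sharing hypotheses on $h$ and $g$.
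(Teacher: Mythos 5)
Your proposal is correct and matches exactly what the paper intends: the paper states this as a routine fact, with the phrase ``under the component-wise composition'' indicating precisely the construction you carry out, and the only substantive check --- that the rigid-sort sharing constraint is closed under composition and holds for identities --- is the one you verify, correctly, from the sharing hypotheses on $h$ and $g$ and from the equality of rigid carriers across worlds.
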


\paragraph{Reducts}
Every signature morphism $\chi \colon \Delta \to \Delta^1$ induces appropriate \emph{reductions of models}:
every $\Delta^1$-model $(V, N)$ is reduced to a $\Delta$-model $(V,N) \red_{\chi}$ that interprets every symbol $x$ in $\Delta$ as $(V, N)_{\chi(x)}$.
When $\chi$ is an inclusion, we usually denote $(V, N) \red_\chi$ by $(V, N) \red_\Delta$ -- in this case, the model reduct simply forgets the interpretation of those symbols in $\Delta^1$ that do not belong to $\Delta$.
\begin{fact}
For each signature morphism
 $\chi\colon\Delta\to\Delta'$ and each Kripke structure $(W,M)$ over
 $\Delta'$, the map  $\Mod^\HFOL$ from $\Sig^\HFOL$ to $\Cat^{op}$, defined by
 $\Mod^\HFOL(\chi)(W,M) = (W,M)\red_\chi$, is a functor.
\end{fact}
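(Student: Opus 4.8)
The plan is to verify the functor conditions one at a time, delegating all the real computation to the already-established functoriality of the first-order model functor $\Mod^\FOL$ applied separately to the two components of a Kripke structure. On objects there is nothing new to do: $\Mod^\HFOL(\Delta)$ is the category of Kripke $\Delta$-structures provided by the preceding Fact, and the target is $\Cat^{op}$ precisely because the reduct $(-)\red_\chi$ sends $\Delta'$-structures to $\Delta$-structures, i.e.\ runs contravariantly to $\chi\colon\Delta\to\Delta'$. So the content is concentrated in (i) showing that $\Mod^\HFOL(\chi)=(-)\red_\chi$ is a well-defined functor $\Mod^\HFOL(\Delta')\to\Mod^\HFOL(\Delta)$, and (ii) checking preservation of identities and composites of signature morphisms. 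Throughout I write the codomain of $\chi$ as $\Delta'=(\Sigma_1^\nom,\Sigma_1^\rigid\subseteq\Sigma_1)$.

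First I would check that $(-)\red_\chi$ is well defined on objects. Given a Kripke $\Delta'$-structure $(W,M)$, its frame reduct $W\red_{\chi^\nom}$ is a $\Sigma^\nom$-structure with the same non-empty universe $|W|$, so the possible worlds are unchanged, and each per-world structure $M_w\red_\chi$ is a $\Sigma$-structure. The only genuinely hybrid point is rigidity. Here I would use functoriality of first-order reducts to rewrite the iterated reduct $(M_w\red_\chi)\red_{\Sigma^\rigid}$ as the reduct of $M_w$ along the composite $\Sigma^\rigid\hookrightarrow\Sigma\xrightarrow{\chi}\Sigma_1$; since the $\HFOL$ signature-morphism condition gives $\chi(\Sigma^\rigid)\subseteq\Sigma_1^\rigid$, this composite factors through $\Sigma_1^\rigid\hookrightarrow\Sigma_1$, so $(M_w\red_\chi)\red_{\Sigma^\rigid}$ is determined by $M_w\red_{\Sigma_1^\rigid}$, which is world-independent because $(W,M)$ is a Kripke structure. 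Hence $M_w\red_\chi$ interprets the rigid symbols of $\Delta$ identically across all worlds, so $(W,M)\red_\chi$ is indeed a Kripke $\Delta$-structure.

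Next I would treat morphisms. A $\Delta'$-homomorphism $h\colon(W,M)\to(V,N)$ is reduced by taking the frame component along $\chi^\nom$ and each $h_w$ along $\chi$; that these assemble into a $\Delta$-homomorphism again reduces to the first-order case, the only extra point being the shared-rigid-sort condition: for a rigid sort $s\in S^\rigid$ the component of $h\red_\chi$ at $s$ is the component of $h$ at $\chi(s)\in S_1^\rigid$, which is already independent of the world. Functoriality of $(-)\red_\chi$ for fixed $\chi$ — preservation of identity homomorphisms and of composites — is then immediate from the corresponding first-order facts, carried out component-wise.

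Finally I would verify the two functor axioms for $\Mod^\HFOL$ itself. Reduction along the identity signature morphism interprets every symbol as itself, whence $\Mod^\HFOL(1_\Delta)=1_{\Mod^\HFOL(\Delta)}$. For composites $\Delta\xrightarrow{\chi}\Delta'\xrightarrow{\chi'}\Delta''$ the required identity is $(W,M)\red_{\chi'\circ\chi}=\big((W,M)\red_{\chi'}\big)\red_\chi$, i.e.\ reducing along a composite equals reducing along the factors in the opposite order; this is exactly the contravariant composition law of $\Cat^{op}$ and follows from the analogous identity for $\Mod^\FOL$ on both the frame and the per-world structures. I do not expect any serious obstacle here: the proof is a bookkeeping argument, and its only load-bearing step is that the defining inclusion $\chi(\Sigma^\rigid)\subseteq\Sigma_1^\rigid$ of an $\HFOL$ signature morphism is exactly what makes both rigidity and the shared-rigid-sort condition survive reduction.
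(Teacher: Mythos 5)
Your proof is correct. The paper states this result as a \emph{Fact} with no proof at all, treating it as routine bookkeeping; your componentwise verification --- delegating everything to first-order reduct functoriality and isolating $\chi(\Sigma^\rigid)\subseteq\Sigma_1^\rigid$ as the one point that makes rigidity and the shared-rigid-sort condition survive reduction --- is exactly the argument the paper leaves implicit.
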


\paragraph{Hybrid terms}
For any signature $\Delta$, we make the following notational conventions:
(a)~$S^\ext\coloneqq S^\rigid\cup\{\nom\}$ the extended set of rigid sorts, where $\nom$ is the sort of nominals,
(b)~$S^\flex \coloneqq S \setminus S^{\rigid}$ the subset of flexible sorts,  
(c)~$F^\flex\coloneqq F\setminus F^\rigid$ the subset of flexible function symbols, 
(d)~$P^\flex\coloneqq P\setminus  P^\rigid$ the subset of flexible relation symbols.
The \emph{rigidification} of $\Sigma$ with respect to $ F^\nom$ is the signature $@\Sigma=(@S,@F,@P)$, where 
(a)~$@S\coloneqq \{\at{k} s \mid k\in F^\nom \mbox{ and } s\in S\}$,
(b)~$@F\coloneqq\{\at{k}\sigma\colon \at{k}\ari \to \at{k} s \mid k\in F^\nom \mbox{ and } (\sigma\colon \ari\to s) \in F \}$, and
(c)~$@P\coloneqq \{\at{k} \pi\colon \at{k} \ari \mid k\in F^\nom \mbox{ and }(\pi\colon\ari)\in P\}$.~\footnote{$\at{k} (s_1\ldots s_n) \coloneqq \at{k} s_1\ldots\at{k} s_n$ for all arities $s_1\ldots s_n$.}
Since rigid symbols have the same interpretation across worlds, we let $\at{k} x = x$ for all nominals $k\in F^\nom$ and all rigid symbols $x$ in $\Sigma^\rigid$.
 The set of \emph{rigid $\Delta$-terms} is $T_{@\Sigma}$, while the set of \emph{open $\Delta$-terms} is $T_\Sigma$. 
 The set of \emph{hybrid $\Delta$-terms} is $T_{\overline\Sigma}$, where 
 $\overline\Sigma=(\overline{S},\overline{F},\overline{P})$, 
 $\overline{S}=S\cup @S^\flex$,
 $\overline{F}=F\cup @F^\flex$, and
 $\overline{P}=P\cup @P^\flex$.

The interpretation of the hybrid terms in Kripke structures is uniquely defined as follows:
for any $\Delta$-model $(W,M)$, and any possible world $w\in|W|$, 
\begin{enumerate}[1)]
\item $M_{w,\sigma(t)} = M_{w,\sigma}(M_{w,t})$, where $(\sigma\colon\ari\to s)\in F$, and $t\in T_{\overline\Sigma,\ari}$,
\footnote{$M_{w,(t_1,\ldots,t_2)}\coloneqq M_{w,t_1},\ldots,M_{w,{t_n}}$ for all tuples of hybrid terms $(t_1,\ldots,t_n)$.}
\item $M_{w,(\at{k} \sigma)(t)} = M_{w',\sigma} (M_{w,t})$, where $(\at{k} \sigma\colon\at{k} \ari\to\at{k} s)\in @F^\flex$, $t\in T_{\overline\Sigma,\at{k}\ari}$ and $w'=W_k$.
\end{enumerate}

\paragraph{Sentences}
The simplest sentences defined over a signature $\Delta$,
usually referred to as atomic, are given by
\begin{center}
$\rho \Coloneqq k \mid \varrho \mid t_{1} = t_{2} \mid \varpi(t)$
\end{center}
where 
(a)~$k \in  F^\nom$ is a nominal,
(b)~$(\varrho:\nom)\in P^\nom$ is a unary modality,
(c)~$t_i \in T_{\overline\Sigma,s}$ are hybrid terms, $s\in \overline{S}$ is a hybrid sort,
(d)~$\varpi:\ari\in\overline{P}$ and $t\in T_{\overline\Sigma,\ari}$.
We call 
\emph{hybrid equations} sentences of the form $t_1=t_2$, and  
\emph{hybrid relations} sentences of the form $\varpi(t)$.
The set $\Sen^\HFOL(\Delta)$ of \emph{full sentences} over $\Delta$ are given by the following grammar:
\begin{center}
$\varphi \Coloneqq
\rho \mid
\at{k} \varphi \mid
\lnot \varphi \mid
\textstyle\vee \Phi \mid
\store{z} \varphi' \mid
\Exists{X} \varphi'' \mid
\pos{\lambda} \varphi $
\end{center}
where 
(a)~$\rho$ is an atomic sentence,
(b)~$k \in  F^\nom $ is a nominal,
(c)~$\Phi$ is a finite set of sentences over $\Delta$,
(d)~$z$ is a nominal variable for $\Delta$ and $\varphi'$ is a sentence over the signature $\Delta(z)$ obtained from $\Delta$ by adding $z$ as a new constant to $ F^\nom$,
(e)~$X$ is a set of variables for $\Delta$ of sorts from the extended set of rigid sorts $S^\ext$ and $\varphi''$ is a a sentence over the signature $\Delta(X)$ obtained from $\Delta$ by adding the variables in $X$ as new constants to $ F^\nom$ and $F^{\rigid}$, and
(f)~$(\lambda:\nom~\nom)\in P^\nom$ is a binary modality.
Other than the first kind of sentences (\emph{atoms}), we refer to the sentence-building operators, as  
\emph{retrieve}, 
\emph{negation}, 
\emph{disjunction}, 
\emph{store}, 
\emph{existential quantification} and 
\emph{possibility}, 
respectively.
Other Boolean connectives and the universal quantification can be defined as abbreviations of the above sentence building operators.

Each signature morphism $\chi\colon\Delta\to\Delta^1$ induces \emph{sentence translations}:
any $\Delta$-sentence $\varphi$ is translated to a $\Delta^1$-sentence $\chi(\varphi)$ by replacing, in an inductive manner, the symbols in $\Delta$ with symbols from $\Delta^1$ according to $\chi$.

\begin{fact}
$\Sen^\HFOL$ is a functor $\Sig^\HFOL \to \Set$ which maps each signature $\Delta$ to the set of sentences over $\Delta$.
\end{fact}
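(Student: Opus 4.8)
The plan is to verify the two functor axioms, since both the object assignment $\Delta \mapsto \Sen^\HFOL(\Delta)$ and the action on a morphism $\chi$, namely the translation $\Sen^\HFOL(\chi)(\varphi) = \chi(\varphi)$, have already been described in the ``Sentences'' paragraph. Concretely, I must check (i) that $\chi(\varphi)$ is a well-formed $\Delta^1$-sentence whenever $\varphi$ is a $\Delta$-sentence, (ii) that $\Sen^\HFOL(\mathrm{id}_\Delta)$ is the identity map on $\Sen^\HFOL(\Delta)$, and (iii) that $\Sen^\HFOL(\chi'\circ\chi) = \Sen^\HFOL(\chi')\circ\Sen^\HFOL(\chi)$ for composable $\chi\colon\Delta\to\Delta^1$ and $\chi'\colon\Delta^1\to\Delta^2$. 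All three are established by a single structural induction on $\varphi$ following the grammar for full sentences.

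The base case concerns atoms, and reduces to the corresponding statement for hybrid terms. First I would record the auxiliary fact that every signature morphism $\chi$ lifts canonically to a translation of the rigidified and hybrid signatures $\overline\Sigma \to \overline{\Sigma_1}$: because $\chi$ preserves sorts, function and relation symbols together with their arities, and maps $\Sigma^\rigid$ into $\Sigma_1^\rigid$, it sends each hybrid $\Delta$-term of sort $s$ to a hybrid $\Delta^1$-term of sort $\chi(s)$. A routine induction on term structure then shows that term translation preserves identities and commutes with composition, and that the convention $\at{k} x = x$ on rigid symbols is respected. Feeding this into the four atomic forms $k$, $\varrho$, $t_1 = t_2$, $\varpi(t)$ discharges the base case for all three axioms simultaneously.

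The inductive step is immediate for the operators $\at{k}(\cdot)$, $\lnot$, $\vee$ and $\pos{\lambda}$, since these translate componentwise and $\chi$ acts coordinatewise on nominals and modalities. The genuinely delicate cases are \emph{store} and \emph{existential quantification}, where the immediate subformula $\varphi'$ (resp.\ $\varphi''$) lives not over $\Delta$ but over the extended signature $\Delta(z)$ (resp.\ $\Delta(X)$). To handle these I would isolate a lemma asserting that the passage to extended signatures is \emph{functorial}: each $\chi\colon\Delta\to\Delta^1$ extends to a morphism $\chi(z)\colon\Delta(z)\to\Delta^1(z)$ (resp.\ $\chi(X)\colon\Delta(X)\to\Delta^1(X)$) that fixes the freshly added constants and agrees with $\chi$ elsewhere, and that this extension preserves identities and composition, i.e.\ $(\chi'\circ\chi)(z) = \chi'(z)\circ\chi(z)$ and likewise for $X$. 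Granting this, the definition of translation gives $\chi(\store{z}\varphi') = \store{z}\,\chi(z)(\varphi')$ and $\chi(\Exists{X}\varphi'') = \Exists{X}\,\chi(X)(\varphi'')$, and the three axioms for these operators follow by applying the induction hypothesis to $\varphi'$ and $\varphi''$ over the extended signatures.

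The main obstacle I anticipate is precisely the bookkeeping in this last lemma: one must ensure that the fresh nominal variable $z$ and the fresh rigid variables in $X$ remain fresh after translation, so that $\Delta^1(z)$ and $\Delta^1(X)$ are legitimate signatures and the extended morphisms are well-defined and compose on the nose. Treating variables as special constants that are always chosen disjoint from the symbols of every signature in sight makes the extension $\chi\mapsto\chi(X)$ act as the identity on $X$, which is exactly what forces the clean equality $(\chi'\circ\chi)(X)=\chi'(X)\circ\chi(X)$; once this freshness discipline is fixed, the remaining verifications are routine.
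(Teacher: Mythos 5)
Your proposal is correct, and it supplies exactly the routine verification that the paper omits: the statement is given as an unproved \emph{fact}, with the translation only described ``in an inductive manner,'' so your structural induction on the sentence grammar, the auxiliary lemma on hybrid-term translation, and the functorial extension of $\chi$ to $\Delta(z)$ and $\Delta(X)$ are precisely the intended (standard institution-theoretic) argument, including the correct identification of variable freshness as the only delicate point. One small imprecision: for rigid variables the extended morphism fixes the variable \emph{names} but must translate their sorts, so it is really a morphism $\Delta(X)\to\Delta^1(\chi(X))$ with $\chi(X)=\{x:\to\chi(s)\mid x:\to s\in X\}$, rather than literally $\Delta(X)\to\Delta^1(X)$; with that convention (and names drawn from a fixed stock disjoint from all signatures) your on-the-nose equality $(\chi'\circ\chi)(X)=\chi'(X)\circ\chi(X)$ goes through as claimed.
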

\paragraph{Local satisfaction relation}
Given a $\Delta$-model $(W, M)$ and a world $w \in |W|$, we define the \emph{satisfaction of $\Delta$-sentences at $w$} by structural induction as follows:

\noindent\emph{For atomic sentences}:
\begin{itemize}
\item $(W, M) \models^{w} k $ iff $W_k = w$;
\item $(W,M)\models^w \varrho$ iff $w\in W_\varrho$;
\item $(W, M) \models^{w} t_{1} = t_{2}$ iff $M_{w, t_1} = M_{w,t_2}$;
\item $(W, M) \models^{w} \varpi(t)$ iff $M_{w,t} \in M_{w, \varpi}$.
\end{itemize}

\noindent\emph{For full sentences}:
\begin{itemize}
\item $(W, M) \models^{w} \at{k} \varphi$ iff $(W, M) \models^{w'} \varphi$, where $w' = W_{k}$;
\item $(W, M) \models^{w} \neg \varphi$ iff $(W, M) \not\models^{w} \varphi$;
\item $(W, M) \models^{w} \vee \Phi$ iff $(W, M) \models^{w} \varphi$ for some $\varphi \in \Phi$; 
\item $(W, M) \models^{w} \store{z}{\varphi}$ iff $(W^{z \leftarrow w}, M) \models^{w} \varphi$,
where $(W^{z \leftarrow w}, M)$ is the unique $\Delta(z)$-expansion of $(W, M)$ that interprets the variable $z$ as $w$;
\footnote{An expansion of $(W, M)$ to $\Delta(X)$ is a Kripke structure $(W', M')$ over $\Delta(X)$ that interprets all symbols in $\Delta$ in the same way as $(W, M)$.}    
\item $(W, M) \models^w \Exists{X}{\varphi}$ iff $(W', M') \models^w \varphi$ for some expansion $(W', M')$ of $(W, M)$ to the signature $\Delta(X)$;
\footnotemark[\thefootnote]
\item $(W, M) \models^{w} \pos{\lambda} \varphi$ iff $(W, M) \models^{w'} \varphi$ for some $w' \in |W|$ s.t. $(w, w') \in W_{\lambda}$.
\end{itemize}
The following \emph{satisfaction condition} can be proved by induction on the structure of $\Delta$-sentences.
The proof is essentially identical to those developed for several other variants of hybrid logic presented in the literature (see, e.g.~\cite{dia-qvh}).
\begin{proposition}[Local satisfaction condition] \label{prop:sat-cond}
Let $\chi \colon \Delta \to \Delta^1$ be a signature morphism.
Then $(W, M) \models^{w} \chi(\varphi)$ iff $(W, M) \red_{\chi} \models^{w} \varphi$,
for all Kripke structures over $\Delta^1$,
all sentences $\varphi$ over $\Delta$.~\footnote{By the definition of reducts, $(W', M')$ and $(W', M') \red_{\chi}$ have the same possible worlds, which means that the statement of Proposition~\ref{prop:sat-cond} is well-defined.}
\end{proposition}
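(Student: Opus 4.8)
The plan is to proceed by induction on the structure of the $\Delta$-sentence $\varphi$, after first disposing of a preliminary lemma on the interpretation of hybrid terms. Write $(W', M') = (W, M) \red_\chi$; by the definition of reducts this model shares its world set with $(W, M)$, so the two satisfaction relations in the statement are genuinely compared at the same world $w$. The preliminary lemma asserts that $M'_{w, t} = M_{w, \chi(t)}$ for every world $w$ and every hybrid $\Delta$-term $t$, and is proved by a routine induction on $t$: the base case records that the reduct interprets each symbol $x$ of $\Delta$ as $(W, M)_{\chi(x)}$, while the inductive steps for ordinary application $\sigma(t)$ and for the hybrid application $(\at{k}\sigma)(t)$ follow from clauses~1) and~2) defining term interpretation, the latter using that the reduct frame interprets the nominal $k$ as $W_{\chi(k)}$.

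For the atomic sentences, the cases $k$ and $\varrho$ reduce to the facts that $W'_{k} = W_{\chi(k)}$ and $W'_{\varrho} = W_{\chi(\varrho)}$, while $t_1 = t_2$ and $\varpi(t)$ follow at once from the term lemma together with $M'_{w, \varpi} = M_{w, \chi(\varpi)}$, itself an instance of the reduct definition. The Boolean and modal sentence-building operators --- negation, finite disjunction, retrieve $\at{k}$, and possibility $\pos{\lambda}$ --- present no difficulty: in each case $\chi$ commutes with the operator (for instance $\chi(\at{k}\varphi) = \at{\chi(k)}\chi(\varphi)$ and $\chi(\vee\Phi) = \vee\,\chi(\Phi)$), so one simply appeals to the induction hypothesis, invoking $W'_{\lambda} = W_{\chi(\lambda)}$ for the possibility clause and $W'_{k} = W_{\chi(k)}$ for the retrieve clause.

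The delicate cases are the two binders, store $\store{z}\varphi'$ and existential quantification $\Exists{X}\varphi''$, where $\varphi'$ and $\varphi''$ live over the enlarged signatures $\Delta(z)$ and $\Delta(X)$. Here $\chi$ must be extended to a morphism $\chi(z) \colon \Delta(z) \to \Delta^1(z)$, respectively $\chi(X) \colon \Delta(X) \to \Delta^1(X')$ with $X'$ a fresh copy of $X$ of the translated sorts, fixing the freshly added constants and fitting into a commuting square with the signature inclusions. The argument then rests on the compatibility of model expansions with reducts along this square: for any expansion $(V, N)$ of $(W, M)$, the reduct $(V, N) \red_{\chi(X)}$ is an expansion of $(W', M')$, since $(V, N) \red_{\chi(X)} \red_{\Delta} = (V, N) \red_{\Delta^1} \red_{\chi} = (W', M')$ by functoriality; and conversely every expansion of $(W', M')$ to $\Delta(X)$ is obtained in this way. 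For store this correspondence is transparent, because the $z$-expansion of $(W, M)$ interpreting $z$ as $w$ reduces along $\chi(z)$ to the $z$-expansion of $(W', M')$ interpreting $z$ as the same world $w$; for the existential it is an amalgamation-type statement, a bijection between the expansions of $(W, M)$ to $\Delta^1(X')$ and those of $(W', M')$ to $\Delta(X)$, which holds because the variables in $X$ are rigid constants whose interpretations are chosen freely and shared across worlds. Granting this correspondence, the induction hypothesis applied to $\varphi'$ and $\varphi''$ closes both cases. The main obstacle is thus isolating and verifying this expansion--reduct compatibility, in particular the surjectivity of the reduct map in the existential case; everything else is bookkeeping.
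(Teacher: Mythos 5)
Your proof is correct and follows exactly the route the paper indicates: a structural induction on $\Delta$-sentences (generalized over signature morphisms so the induction hypothesis applies to the extended morphisms $\chi(z)$ and $\chi(X)$ in the binder cases), with the term-interpretation lemma and the expansion--reduct correspondence as the only substantive ingredients. The paper itself omits the details, citing that the argument is ``essentially identical'' to standard satisfaction-condition proofs for hybrid logics, and your proposal is precisely that standard argument carried out in full.
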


\paragraph{Global satisfaction relation}
The global satisfaction relation is defined by 
\begin{center}
$(W,M)\models \varphi$ iff for each possible world $w\in|W|$ we have $(W,M)\models^w\varphi$
\end{center}
for all signatures $\Delta$, all Kripke $\Delta$-structures $(W,M)$ and all $\Delta$-sentences $\varphi$.
The global consequence relation between sentences is defined by
\begin{center} 
$\varphi\models \psi$ iff $(W,M)\models \varphi$ implies $(W,M)\models\psi$ for all Kripke structures $(W,M)$, 
\end{center}
for all sentences $\varphi$ and $\psi$ over the same signature. 
The global consequence relation can be extended to sets of sentences in the usual way.

We adopt the terminology used in the algebraic specification literature.
A pair $(\Delta,\Phi)$ consisting of a signature $\Delta$ and a set of sentences $\Phi$ over $\Delta$ is called a \emph{presentation}.
We let $\Phi^\bullet$ denote $\{\varphi\in\Sen(\Delta)\mid \Phi\models \varphi\}$, the closure of $\Phi$ under the global consequence relation.
A \emph{presentation morphism} $\chi:(\Delta,\Phi)\to (\Delta^1,\Phi^1)$ consists of a signature morphism $\chi:\Delta\to\Delta^1$ such that $\Phi^1\models\chi(\Phi)$.
Any presentation $(\Delta,T)$ such that $T=T^\bullet$ is called a \emph{theory} .
A \emph{theory morphism} is just a presentation morphism between theories.

\paragraph{Examples}
Fragments of $\HFOL$ have been studied extensively in the literature.
We give a few examples.

\begin{example}(Rigid First-Order Hybrid Logic ($\RFOHL$) \cite{DBLP:conf/wollic/BlackburnMMH19})\label{ex:RFOHL}
This logic is obtained from $\HFOL$ by restricting the signatures $\Delta=(\Sigma^\nom,\Sigma^\rigid\subseteq\Sigma)$ such that 
(a)~$\Sigma^\nom$ has only one binary modality, 
(b)~$\Sigma$ is single-sorted, 
(c)~the unique sort is rigid,
(d)~there are no rigid function symbols except variables (regarded here as special constants), and 
(e)~there are no rigid relation symbols.
\end{example}
\begin{example}(Hybrid First-Order Logic with user-defined Sharing ($\HFOLS$)) \label{ex:HFOLS}
This logic has the same signatures and Kripke structures as $\HFOL$.
The sentences are obtained from atoms constructed with open terms only, that is,
if $\Delta=(\Sigma^\nom,\Sigma^\rigid\subseteq\Sigma)$, 
all (ground) equations over $\Delta$ are of the form $t_1=t_2$, where $t_1,t_2\in T_\Sigma$, and
all (ground) relation over $\Delta$ are of the form $\varpi(t)$, where $(\varpi:\ari)\in P$ and $t\in T_{\Sigma,\ari}$.
A version of $\HFOLS$ is the underlying logic of H system~\cite{cod-h}.
Other variants of $\HFOLS$ have been studied in \cite{martins,dia-msc,dia-qvh}. 
\end{example}
\begin{example}(Hybrid Propositional Logic ($\HPL$)) \label{ex:HDPL}
This is the most common form of multi-modal hybrid logic (e.g. \cite{ArecesB01}).
$\HPL$ is obtained from $\HFOL$ by restricting the signatures $\Delta=(\Sigma^\nom,\Sigma^\rigid\subseteq\Sigma)$ such that $\Sigma^\rigid$ is empty and the set of sorts in $\Sigma$ is empty.
Notice that if $\Sigma=(S,F,P)$ and $S=\emptyset$ then $P$ contains only propositional symbols.
\end{example}

\paragraph{Reachability}
Let $(W,M)$ be a Kripke structure over a signature $\Delta$.
\begin{itemize}
\item A possible world $w\in|W|$ is called \emph{reachable} if it is the denotation of some nominal,
that is, $w= W_k$ for some nominal $k\in F^\nom$.
\item Let $w\in|W|$ be a possible world and $s\in S$ a sort. 
An element $e\in M_{w,s}$ is called reachable if it is the denotation of some hybrid term, that is, $w=W_k$ and $e=M_{w,t}$ for some nominal $k\in F^\nom$ and rigid hybrid term $t\in T_{@\Sigma,@_k s}$.
\item $(W,M)$ is reachable by an $S^\ext$-sorted set $C$ of nominals and rigid hybrid terms if 
(a)~its set of possible worlds consists of denotations of nominals in $C$, and 
(b)~its carrier sets for the rigid sorts consist of denotations of rigid hybrid terms from $C$.
\item $(W,M)$ is reachable if $(W,M)$ is reachable by nominals and rigid hybrid terms.
\end{itemize}
The notion of reachability is connected to quantification, which is the reason for considering a Kripke structure reachable if its elements of rigid sorts are denotations of terms, thus disregarding elements of flexible sorts.
This notion is semantic and it makes sense also for fragments of $\HFOL$ whose sentences do not contain rigid hybrid terms such as $\RFOHL$, $\HFOLS$ or $\HPL$.
In institution theory, the notion of reachability was originally defined in \cite{Petria07} at an abstract level, and it played an important role in proving several proof-theoretic results~\cite{gai-com,gai-cbl,gai-bir} as well as model-theoretic properties~\cite{gai-int,gai-dls,gai-her,tutu-iilp}.

\section{Basic definitions and results} \label{sec:basics}
In this section, we establish the terminology and we state some foundational results necessary for the present study.
We start by noticing that rigid quantification cannot refer to unreachable elements of flexible sorts.
\begin{lemma} \label{lemma:reach-equiv}
Let $(W,M)$  be a reachable Kripke structure over a signature $\Delta$.
Let $(W,N)$ be a Kripke structure obtained from $(W,M)$ by
(a)~replacing unreachable elements of flexible sorts by some new elements, 
(b)~preserving the interpretation of function and relation symbols on the elements inherited from $(W,M)$, and
(c)~interpreting function symbols arbitrarily on the new arguments.
Then $(W,M)$ and $(W,N)$ are elementarily equivalent, in symbols,  $(W,M)\equiv(W,N)$ (that is, $(W,M)\models\varphi$ iff $(W,N)\models\varphi$, for all $\varphi\in\Sen(\Delta)$). 
\end{lemma}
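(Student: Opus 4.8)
The plan is to reduce the global statement to a local one and prove the latter by structural induction on sentences. Since $(W,M)\models\varphi$ holds iff $(W,M)\models^{w}\varphi$ for every world $w$, and the frame $W$ is shared by both structures, it suffices to show that $(W,M)\models^{w}\varphi$ iff $(W,N)\models^{w}\varphi$ for every $w\in|W|$ and every $\varphi$. To make the induction survive the sentence builders \emph{store} and \emph{existential quantification}, which enlarge the signature, I would prove this local equivalence \emph{uniformly}: for every signature $\Delta'$ obtained from $\Delta$ by adjoining constants of sorts in $S^\ext$, and every pair $((W,M),(W,N))$ of $\Delta'$-structures in which $(W,M)$ is reachable and $(W,N)$ arises from $(W,M)$ by the prescribed modification of the unreachable elements of flexible sorts. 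The crucial point is that this class of pairs is closed under the expansions used in the inductive step.

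The crux is a preliminary observation about hybrid terms: \emph{at any world named by a nominal, the denotation of every ground hybrid term is a reachable element}. This is a routine induction on terms. For a rigid symbol the value lies in a rigid sort and is reachable by hypothesis; for a flexible constant $\sigma$ at a world $w=W_k$ one has $M_{w,\sigma}=M_{w,\at{k}\sigma}$ with $\at{k}\sigma$ a rigid hybrid term; and for a compound $\sigma(t)$ one replaces each argument by a rigid hybrid term denoting the same reachable element and rewrites $M_{w,\sigma(t)}$, via clause (2) in the interpretation of hybrid terms and $w=W_k$, as the value of a rigid hybrid term. Because $(W,M)$ is reachable every world has the form $W_k$, so this applies at \emph{every} world. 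Since $(W,N)$ shares the frame and the rigid part of $(W,M)$ and agrees with it on all reachable elements, a second induction on terms yields $N_{w,t}=M_{w,t}$ for all ground hybrid terms $t$ and all worlds $w$. The atomic cases of the main induction follow immediately: the nominal and modality atoms depend only on the shared frame $W$, while $t_1=t_2$ and $\varpi(t)$ are decided by term denotations and by the interpretation of relation symbols, both of which agree on reachable data.

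For the inductive step the Boolean and modal operators $\at{k}\varphi$, $\lnot\varphi$, $\vee\Phi$ and $\pos{\lambda}\varphi$ are immediate from the induction hypothesis, using that $W$, hence $W_k$ and $W_\lambda$, is unchanged. The delicate cases are $\store{z}\varphi'$ and $\Exists{X}\varphi''$, and verifying that the inductive invariant survives the corresponding expansions is the main obstacle. Here I would argue that adjoining a nominal constant $z$ (interpreted as the current world, which is already named) or constants $X$ of sorts in $S^\ext$ creates no new reachable elements of flexible sorts: the added constants denote worlds or elements of rigid sorts that are already reachable, so every rigid hybrid term over the enlarged signature has the same denotation as one over $\Delta'$; hence $(W,M)$ remains reachable and the two expansions remain related by the same modification. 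Crucially, quantification is restricted to $S^\ext$, so no constant of a flexible sort is ever introduced, and the expansions of $(W,M)$ and $(W,N)$ stand in bijective correspondence through the shared worlds and shared rigid carriers. Applying the induction hypothesis to $\varphi'$, respectively $\varphi''$, over the enlarged signature then gives the equivalence, completing the induction and thus the proof.
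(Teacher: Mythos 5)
Your proof is correct and takes essentially the same approach as the paper, which disposes of this lemma with the single remark that it ``is straightforward by induction on the structure of sentences.'' Your write-up simply supplies the details that such an induction must handle and that the paper leaves implicit: the reachability of ground hybrid term denotations at named worlds, the resulting agreement of $(W,M)$ and $(W,N)$ on atoms, and the preservation of the inductive invariant under expansions by constants of sorts in $S^\ext$ (which exist precisely because quantification and store never introduce flexible-sorted constants).
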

The proof of the lemma above is straightforward by induction on the structure of sentences.
We recall Robinson consistency property as stated in institution theory (see, for example, ~\cite{DBLP:journals/sLogica/GainaP07}).
\begin{definition} \label{def:rob}
Consider the following square $\mathcal{S}$ of signature morphisms.
\begin{center}
\begin{tikzcd}
 \Delta^2 \ar[r,"\upsilon_2"] & \Delta'\\
 \Delta \ar[u,"\chi_2"] \ar[r,swap,"\chi_1"] & \Delta^1 \ar[u,swap,"\upsilon_1"]
\end{tikzcd}
\end{center}
$\mathcal{S}$ is a \emph{Robinson square}, if for every consistent theories $T^1 \subseteq \Sen(\Delta^1)$, $T^2 \subseteq \Sen(\Delta^2)$ and complete theory $T\subseteq \Sen(\Delta)$ such that $\chi_1$, $\chi_2$ are theory morphisms, it holds that $\upsilon_1(T^1) \cup \upsilon_2(T^2)$ is consistent.
\end{definition}
As it was shown in~\cite{gai-godel}, $\HFOL$ is compact, which means that interpolation is equivalent to Robinson consistency property.
\begin{proposition}
The following are equivalent for a commutative square $\mathcal{S}$ of signature morphisms as depicted in the diagram of Definition~\ref{def:rob}:
\begin{enumerate}[1)]
\item $\mathcal{S}$ is a Robinson square.
\item For every consistent theories $T^1 \subseteq \Sen(\Delta^1)$ and $T^2 \subseteq \Sen(\Delta^2)$ such that $\chi_1^{-1}(T^1)\cup\chi_2^{-1}(T^2)$ is consistent, the set $\upsilon_1(T^1)\cup \upsilon_2(T^2)$ is consistent.
\item $\mathcal{S}$ is a Craig Interpolation (CI) square , that is, for every $\Phi^1\subseteq\Sen(\Delta^1)$ and $\Phi^2\subseteq \Sen(\Delta^2)$ such that $\upsilon_1(\Phi^1)\models \upsilon_2(\Phi^2)$ there exists $\Phi\subseteq \Sen(\Delta)$ such that $\Phi^1\models \chi_1(\Phi)$ and $\chi_2(\Phi)\models\Phi^2$.
\end{enumerate}
\end{proposition}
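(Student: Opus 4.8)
The plan is to establish the cycle of implications $(3)\Rightarrow(2)\Rightarrow(1)\Rightarrow(3)$, with compactness of $\HFOL$ (proved in \cite{gai-godel}) and the Satisfaction Condition (Proposition~\ref{prop:sat-cond}) as the standing tools. The first thing I would record is that, for any consistent theory $T^i\subseteq\Sen(\Delta^i)$, the inverse image $\chi_i^{-1}(T^i)=\{\varphi\in\Sen(\Delta)\mid T^i\models\chi_i(\varphi)\}$ is itself a consistent theory over $\Delta$: a model of $T^i$ reduces along $\chi_i$, via Proposition~\ref{prop:sat-cond}, to a model of $\chi_i^{-1}(T^i)$, and deductive closure is immediate. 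This observation already trivialises the step $(2)\Rightarrow(1)$.

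Indeed, for $(2)\Rightarrow(1)$ suppose we are given the data of a Robinson square: consistent $T^1,T^2$ and a complete $T\subseteq\Sen(\Delta)$ for which $\chi_1,\chi_2$ are theory morphisms. The morphism conditions $T^i\models\chi_i(T)$ say exactly that $T\subseteq\chi_i^{-1}(T^i)$. Reading \emph{complete} as \emph{maximally consistent}, $T$ is then a maximal consistent theory contained in the consistent theory $\chi_i^{-1}(T^i)$, so maximality forces $\chi_1^{-1}(T^1)=T=\chi_2^{-1}(T^2)$. Hence $\chi_1^{-1}(T^1)\cup\chi_2^{-1}(T^2)=T$ is consistent, and $(2)$ delivers the consistency of $\upsilon_1(T^1)\cup\upsilon_2(T^2)$, as required.

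The two remaining implications form the Lindstr\"om-style core \cite{lind78}. For $(1)\Rightarrow(3)$ I would propose the interpolant $\Phi:=\{\varphi\in\Sen(\Delta)\mid\Phi^1\models\chi_1(\varphi)\}$, so that $\Phi^1\models\chi_1(\Phi)$ holds by construction and only $\chi_2(\Phi)\models\Phi^2$ remains; assuming the contrary, a $\Delta^2$-model of $\chi_2(\Phi)$ refuting some $\psi\in\Phi^2$ yields a consistent $T^2$ with $\Phi\subseteq\chi_2^{-1}(T^2)$, which I would pair with a completion of $(\Phi^1)^\bullet$ chosen so that $\chi_1,\chi_2$ become theory morphisms into one common complete $\Delta$-theory; then $(1)$ produces a model of $\upsilon_1(T^1)\cup\upsilon_2(T^2)$ satisfying $\upsilon_1(\Phi^1)$ but refuting $\upsilon_2(\psi)$, contradicting $\upsilon_1(\Phi^1)\models\upsilon_2(\Phi^2)$. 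For $(3)\Rightarrow(2)$ I would argue contrapositively: inconsistency of $\upsilon_1(T^1)\cup\upsilon_2(T^2)$ gives, by compactness, finite $E^i\subseteq T^i$ with $\upsilon_1(E^1)\cup\upsilon_2(E^2)$ unsatisfiable, which I rewrite as an entailment $\upsilon_1(\Phi^1)\models\upsilon_2(\Phi^2)$; Craig interpolation then returns a $\Phi$ that lands inside $\chi_1^{-1}(T^1)$ while its negation lands inside $\chi_2^{-1}(T^2)$, contradicting the consistency of $\chi_1^{-1}(T^1)\cup\chi_2^{-1}(T^2)$.

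The hard part, pervading both of the last two implications, is that under the \emph{global} satisfaction relation the connective $\lnot$ is not a Boolean complement -- for instance $\lnot k$ is globally unsatisfiable although $k$ is satisfiable -- so the classical moves of turning joint unsatisfiability into an entailment, of contraposing $\models$, and of completing a theory by deciding each sentence, are all illegitimate as they stand. Finite conjunctions cause no trouble, since global satisfaction commutes with them; the whole difficulty is isolated in negation. The device I would use to defuse it is the retrieve operator: after conservatively adjoining fresh nominals along a signature morphism so that the relevant worlds are named, the sentences $\at{k}\varphi$ are world-independent and therefore closed under classical negation at the global level, and on this retrieve fragment the global consequence relation is genuinely Boolean. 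Carrying out the completions and the unsatisfiability-to-entailment rewrites inside this fragment -- and then transporting the conclusions back along the Satisfaction Condition -- is what makes the Lindstr\"om argument go through; with compactness already available from \cite{gai-godel}, this hybrid reduction is the only genuinely non-classical ingredient of the proof.
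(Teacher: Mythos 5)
Your skeleton is sound, and part of it is complete as written: the observation that $\chi_i^{-1}(T^i)$ is a consistent theory (via the satisfaction condition), and the step $(2)\Rightarrow(1)$ by maximality of $T$, are correct --- reading ``complete'' as ``maximally consistent'' is indeed how the paper uses the notion in Section~\ref{sec:rc}. For calibration: the paper itself gives no detailed proof, delegating $(1)\Leftrightarrow(2)$ to \cite[Proposition~6]{DBLP:journals/sLogica/GainaP07} and $(1)\Leftrightarrow(3)$ to \cite[Corollary~3.1]{tar-bit}, so the question is whether your sketch could be completed as it stands.

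It cannot, because the device you propose for taming the non-Boolean negation --- adjoining fresh nominals and then ``transporting the conclusions back along the Satisfaction Condition'' --- fails at exactly the load-bearing step of $(1)\Rightarrow(3)$ and $(3)\Rightarrow(2)$. Adjoining nominals takes you outside the signatures of the square: the interpolant must consist of $\Delta$-sentences, and the theories $T,T^1,T^2$ fed into the Robinson property must live over $\Delta,\Delta^1,\Delta^2$, so neither hypothesis $(1)$ nor $(3)$ applies to the enlarged square. Moreover, the satisfaction condition relates $(W,M)\models^w\chi(\varphi)$ with $(W,M)\red_\chi\models^w\varphi$; it never eliminates a fresh symbol from a sentence, so there is nothing to transport back. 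What eliminates the fresh name is binding it with a nominal-variable quantifier, and that is precisely the paper's semantic-opposite device from Section~\ref{sec:rc}: $+\psi=\Forall{z^\circ}\at{z^\circ}\psi$ and $-\psi=\Exists{z^\circ}\at{z^\circ}\neg\psi$, with $(W,M)\not\models\psi$ iff $(W,M)\models-\psi$ (Lemma~\ref{lemma:+}), these sentences being world-independent --- which also fixes the disjunction problem you do not mention (global satisfaction fails to commute with $\vee$ in general, but commutes for world-independent disjuncts). With $\pm$ in hand your argument goes through: in $(3)\Rightarrow(2)$ the inconsistency of $\upsilon_1(E^1)\cup\upsilon_2(E^2)$ becomes the entailment $\upsilon_1(E^1)\models\upsilon_2(\bigvee_{\epsilon\in E^2}-\epsilon)$, and ``the negation of the interpolant landing in $\chi_2^{-1}(T^2)$'' becomes $\bigvee_{\varphi\in\Phi_0}-\varphi\in\chi_2^{-1}(T^2)$ for a suitable finite $\Phi_0\subseteq\Phi$; in $(1)\Rightarrow(3)$ the complete theory is not freely ``chosen'' but forced, $T=\mathrm{Th}\bigl((W^2,M^2)\red_{\chi_2}\bigr)$ for the model $(W^2,M^2)$ refuting $\psi$, with $T^2=\mathrm{Th}(W^2,M^2)\ni-\psi$, and the genuinely nontrivial residue --- which your sketch glosses over --- is the consistency of $\Phi^1\cup\chi_1(T)$, proved by compactness from $\Phi^1\models\chi_1\bigl(\bigvee_{\tau\in T_0}-\tau\bigr)$ for finite $T_0\subseteq T$ together with your definition of $\Phi$ as $\chi_1^{-1}\bigl((\Phi^1)^\bullet\bigr)$. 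In short: right architecture, right diagnosis of the obstacle, wrong cure; the cure is Lemma~\ref{lemma:+}, not signature extension.
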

The equivalence of the first two statements can be proved similarly to \cite[Proposition 6]{DBLP:journals/sLogica/GainaP07}, while the equivalence of first and last statement can be shown using ideas from~\cite[Corollary 3.1]{tar-bit}.

\begin{example} \label{ex:counter-1}
Let
$\Delta^1\stackrel{\chi_1}\leftarrow \Delta\stackrel{\chi_2}\to\Delta^2$ 
be a span of signature morphisms such that
\begin{itemize}
\item 
(a)~$\Delta$ has three nominals $\{k_1,k_2,k_3\}$, 
three flexible sorts $\{s_1,s_2,s_3\}$ and 
three flexible constants $\{c_1:\to s_1,c_2:\to s_2,c_3:\to s_3\}$;
(b)~$\Delta^1$ has two nominals $\{k,k_3\}$, 
one flexible sort $\{s\}$ and 
two flexible constants $\{c:\to s,c_3:\to s\}$;
(c)~$\Delta^2$ has two nominals $\{k,k_1\}$, 
two flexible sorts $\{s,s_2\}$ and 
three flexible constants $\{c_1:\to s,c_2:\to s_2,c_3:\to s\}$;
\item 
(a)~on nominals $\chi_1(k_1)=\chi_1(k_2)=k$, $\chi_1(k_3)=k_3$, 
on sorts $\chi_1(s_1)=\chi_1(s_2)=\chi_1(s_3)=s$, 
on function symbols $\chi_1(c_1:\to s_1)=\chi(c_2:\to s_2)=c:\to s$, $\chi_1(c_3:\to s_3)=c_3:\to s$; 
(b)~on nominals $\chi_2(k_1)=k_1$, $\chi_2(k_2)=\chi_2(k_3)=k$,
on sorts $\chi_2(s_1)=\chi_2(s_3)=s$, $\chi_2(s_2)=s_2$, 
on function symbols
$\chi_2(c_1:\to s_1)=c_1:\to s$,
$\chi_2(c_2:\to s_2)=c_2:\to s_2$,
$\chi_2(c_3:\to s_3)=c_3:\to s$.
\end{itemize}
Let $\Delta^1\stackrel{\upsilon_1}\to \Delta' \stackrel{\upsilon_2}\leftarrow\Delta^2$ be a pushout of the above span such that
\begin{itemize}
\item $\Delta'$ has 
one nominal $\{k\}$,
one flexible sort $\{s\}$ and 
two flexible constants $\{c:\to s, c_3:\to s\}$;
\item 
(a)~$\upsilon_1(k)=\upsilon_1(k_3)=k$, $\upsilon_1(c:\to s)= c:\to s$ and $\upsilon_1(c_3:\to s)=c_3:\to s$;
(b)~$\upsilon_2(k)=\upsilon_2(k_1)=k$, $\upsilon_2(c_1:\to s)=c:\to s$, $\upsilon_2(c_2:\to s_2)=c:\to s$ and $\upsilon_2(c_3:\to s)=c_3:\to s$.
\end{itemize}
\end{example}

According to the following lemma, interpolation doesn't hold in $\HFOL$, in general.

\begin{lemma}\label{lemma:counter-1}
The pushout described in Example~\ref{ex:counter-1} is not a CI square.
\end{lemma}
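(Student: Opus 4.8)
The plan is to refute statement~3) of the preceding proposition directly for this square, by exhibiting premises that satisfy the hypothesis of the CI property but admit no interpolant. I take $\Phi^1 = \{c = c_3\} \subseteq \Sen(\Delta^1)$ and $\Phi^2 = \{c_1 = c_3\} \subseteq \Sen(\Delta^2)$; both are legitimate atomic hybrid equations, since in $\Delta^1$ the constants $c$ and $c_3$ share the sort $s$, and in $\Delta^2$ the constants $c_1$ and $c_3$ share the sort $s$. Applying the pushout morphisms gives $\upsilon_1(c = c_3) = (c = c_3)$ and $\upsilon_2(c_1 = c_3) = (c = c_3)$ over $\Delta'$, so $\upsilon_1(\Phi^1) \models \upsilon_2(\Phi^2)$ holds trivially and the hypothesis of the CI square is met. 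It then remains to show that no $\Phi \subseteq \Sen(\Delta)$ can satisfy both $\Phi^1 \models \chi_1(\Phi)$ and $\chi_2(\Phi) \models \Phi^2$.

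Assuming, for contradiction, that such a $\Phi$ exists, I construct two single-world Kripke structures whose reducts will be indistinguishable over $\Delta$. On the $\Delta^1$ side I take $(W_1, M_1)$ with one world $w_0$ (the denotation of both $k$ and $k_3$), the sort $s$ interpreted as a two-element set $\{a,b\}$, and $c = c_3 = a$; then $(W_1, M_1) \models \Phi^1$, so $\Phi^1 \models \chi_1(\Phi)$ together with Proposition~\ref{prop:sat-cond} yields $(W_1, M_1)\red_{\chi_1} \models \Phi$. On the $\Delta^2$ side I take $(W_2, M_2)$ with one world $w_0$ (the denotation of both $k$ and $k_1$), $s$ interpreted as $\{a,b\}$, $s_2$ interpreted as a two-element set $\{p,q\}$, and $c_1 = a$, $c_3 = b$, $c_2 = p$; here $c_1 \ne c_3$, so $(W_2, M_2) \not\models \Phi^2$.

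The crux is to show that the reducts $(W_1, M_1)\red_{\chi_1}$ and $(W_2, M_2)\red_{\chi_2}$ are isomorphic $\Delta$-structures, hence satisfy exactly the same $\Delta$-sentences. Unwinding the definitions, both have the single world $w_0$ with all three nominals $k_1, k_2, k_3$ pointing to it; in the first, $s_1 = s_2 = s_3 = \{a,b\}$ with $c_1 = c_2 = c_3 = a$, while in the second, $s_1 = s_3 = \{a,b\}$ with $c_1 = a$, $c_3 = b$ and $s_2 = \{p,q\}$ with $c_2 = p$. Since $\Delta$ carries no function or relation symbols beyond the three constants and the three sorts are pairwise distinct, an isomorphism is merely a sort-indexed family of bijections fixing each constant: the identity on $s_1$, the swap $a \leftrightarrow b$ on $s_3$, and $p \mapsto a$, $q \mapsto b$ on $s_2$. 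What makes this possible is that the only genuine difference between the reducts---whether $c_1$ and $c_3$ denote the same element---concerns constants of the distinct sorts $s_1 \ne s_3$ in $\Delta$, so it is not even expressible by a well-formed $\Delta$-sentence. By invariance of satisfaction under isomorphism, $(W_2, M_2)\red_{\chi_2} \models \Phi$; Proposition~\ref{prop:sat-cond} then gives $(W_2, M_2) \models \chi_2(\Phi)$, and $\chi_2(\Phi) \models \Phi^2$ forces $(W_2, M_2) \models c_1 = c_3$, contradicting $c_1 \ne c_3$.

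The main obstacle is the construction in the third step: the two reducts must be made genuinely $\Delta$-indistinguishable, which requires the carriers of $s$ and $s_2$ on the $\Delta^2$ side and of $s$ on the $\Delta^1$ side to have matching cardinalities (here all of size two), because the $\chi_1$-reduct collapses $s_1$, $s_2$ and $s_3$ onto a single carrier. Arguing via an outright isomorphism, rather than attempting to describe the candidate $\Phi$, is precisely what lets the contradiction go through uniformly for every possible interpolant in one stroke. The only external fact I invoke is that satisfaction in $\HFOL$ is invariant under isomorphism of Kripke structures, which is routine by induction on the structure of sentences.
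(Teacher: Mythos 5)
Your proposal is correct and follows essentially the same route as the paper's own proof: the same premise sets (modulo the inessential $\at{k_3}$/$\at{k_1}$ prefixes), the same two-element single-world models, and the same key observation that the $c_1$-versus-$c_3$ discrepancy lives in the distinct $\Delta$-sorts $s_1\neq s_3$, so a sort-wise bijection swapping the carrier of $s_3$ is an isomorphism and no $\Delta$-sentence can separate the two reducts. The only (cosmetic) difference is that you build the $\Delta^2$-model directly and compare reducts, whereas the paper modifies the $\chi_1$-reduct and then takes a $\chi_2$-expansion of it.
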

\begin{proof}
Let 
$\Phi^1\coloneqq\{\at{k_3}(c=c_3)\}$ and 
$\Phi^2\coloneqq\{\at{k_1}(c_1 = c_3)\}$.
Obviously, $\upsilon_1(\Phi^1)\models \upsilon_2(\Phi^2)$.
Suppose towards a contradiction that there exists an interpolant $\Phi$ over $\Delta$ such that 
$\Phi^1\models \chi_1(\Phi)$ and $\chi_2(\Phi)\models \Phi^2$.

 Let $(W^1,M^1)$ be the Kripke structure over $\Delta^1$ defined as follows:
$W^1$ consists of one possible world $w$, and 
$M^1_{w}$ is the single-sorted algebra such that $M^1_{w,s}=\{d,e\}$ and $M^1_{w,c}=M^1_{w,c_3}=d$. 
We have $(W^1,M^1)\models\Phi^1$, and since $\Phi^1\models \chi_1(\Phi)$, 
we get $(W^1,M^1)\models\chi_1(\Phi)$.
By the satisfaction condition, $(W^1,M^1)\red_{\chi_1}\models \Phi$.
Let $(V,N)$ be the Kripke structure over $\Delta$ obtained from $(W^1,M^1)\red_{\chi_1}$ by 
changing the interpretation of $c_3:\to s_3$ from $d$ to $e$, which implies that $V_{w,c_3}=e$. 
There exists an isomorphism $h:(V,N)\to (W^1,M^1)\red_{\chi_1}$ such that $h_{w,s_1}$ and $h_{w,s_2}$ are identities, while $h_{w,s_3}(d)=e$ and $h_{w,s_3}(e)=d$.
It follows that $(V,N)\models\Phi$.
There exists a $\chi_2$-expansion $(V^2,N^2)$ of $(V,N)$.
By the satisfaction condition, $(V^2,N^2)\models \chi_2(\Phi)$.
Since $N^2_{w,c_1}=N_{w,c_1}=d$ and $N^2_{w,c_3}=N_{w,c_3}=e$, 
we have $(V^2,N^2)\not \models \Phi^2$, contradicting $\chi_2(\Phi)\models \Phi^2$.
\end{proof}
We are interested in characterizing a span of signature morphisms whose pushout is a CI square.
For this purpose, it is necessary to restrict one of the arrows of the underlying span according to the following definition.

\begin{definition} \label{def:flex}
A signature morphism $\chi:\Delta\to \Delta^1$ \emph{preserves flexible symbols}~if
\begin{enumerate}[1)]
\item $\chi$ preserves flexible sorts, that is, $\chi(s)\in S_1^\flex$ for all $s\in S^\flex$, and
\item $\chi$ adds no new flexible operations on `old' flexible sorts, that is, for all flexible sorts $s\in S^\flex$ and all function symbols $\sigma_1:\ari_1\to \chi(s)\in F_1^\flex$ there exists $\sigma:\ari\to s\in F^\flex$ such that $\chi(\sigma:\ari\to s)=\sigma_1:\ari_1\to\chi(s)$.
\end{enumerate}
If, in addition, $\chi$ is injective on flexible sorts and on flexible function and relation symbols that have at least one flexible sort $s\in S^\flex$ in the arity then we say that $\chi$ \emph{protects flexible symbols}.
\end{definition}

 If $\chi:\Delta\to \Delta^1$ is an inclusion that preserves flexible sorts and adds no new function symbols $\sigma:\ari\to s$ with $s\in S^\flex$ on $\Delta$
then $\chi:\Delta\to \Delta^1$ protects flexible symbols.
If $\Delta$ has no flexible sorts then $\chi:\Delta\to\Delta^1$ protects flexible symbols.
In particular, if $\Delta$ is a $\HPL$ or $\RFOHL$ signature then $S^\flex=\emptyset$, which means that $\chi$ protects flexible symbols.
In applications, $\chi:\Delta\to\Delta^1$ from Definition~\ref{def:flex} is appropriate for hiding information,
which makes $\HFOL$ an instance of the abstract completeness result for structured specifications proved in \cite{DBLP:journals/tcs/Borzyszkowski02}.

\begin{lemma}[Lifting Lemma] \label{lemma:lifting}
Consider the following:
\begin{enumerate}[1)]
\item a signature morphism $\chi:\Delta\to \Delta^1$ which is injective on sorts and nominals,  and protects flexible symbols;
\item a set $C^1$ of new nominals and new rigid constants for $\Delta^1$;
\item $(V^1,N^1)\in|\Mod(\Delta^1(C^1))|$ reachable by $C^1$ and  
$(W,M)\in|\Mod(\Delta(C))|$ reachable by $C$ such that $(V^1,N^1)\red_{\chi^C}\equiv (W,M)$, where
\begin{itemize}
\item $C$ is the reduct of $C^1$ across $\chi$, i.e.
$C\coloneqq\{c:\to s\mid c:\to \chi(s)\in C^1\}$, and
\item $\chi^C:\Delta(C)\to\Delta^1(C^1)$ is the extension of $\chi$ that maps each constant $c:\to s\in C$ to $c:\to \chi(s)\in C^1$.
\end{itemize}
\end{enumerate}
Then $(W^1,M^1)\equiv(V^1,N^1)$ for some $\chi^C$-expansion $(W^1,M^1)$ of $(W,M)$.
\end{lemma}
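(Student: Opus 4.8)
The plan is to reduce the required elementary equivalence $(W^1,M^1)\equiv(V^1,N^1)$ to an \emph{isomorphism of reachable parts}, and then to construct the expansion by transporting the extra $\Delta^1(C^1)$-structure of $(V^1,N^1)$ onto the carriers of $(W,M)$ along that isomorphism. The guiding observation is that quantification in $\HFOL$ ranges only over sorts in $S^\ext$ and never over flexible sorts, so by Lemma~\ref{lemma:reach-equiv} I may disregard all unreachable elements of flexible sorts: what a sentence can detect is captured entirely by the worlds, the rigid elements, and the term-named (reachable) elements of flexible sorts, all of which are pinned down by the atomic diagram over the naming set. The hypothesis that both structures are reachable is therefore doing the heavy lifting, as it forces these ``cores'' to be term-generated.

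First I would extract from $(V^1,N^1)\red_{\chi^C}\equiv(W,M)$ that the two $\Delta(C)$-structures share the same atomic diagram, and hence, both being reachable by $C$ (the reduct is reachable by $C$ because every rigid element of $(V^1,N^1)$ of sort $\chi(s)$ is named by a $C^1$-constant of that sort, which descends to $C$), that their reachable cores are isomorphic. Concretely, $k\mapsto k$ identifies worlds (well defined and injective since $W_{k_1}=W_{k_2}$ holds precisely when $\at{k_1}k_2$ is satisfied, and likewise modality edges are governed by $\at{k_1}\pos{\lambda}k_2$), $c\mapsto c$ identifies rigid elements, and $t\mapsto t$ identifies reachable flexible elements. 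This yields an isomorphism $\theta$ from the reachable core of $(V^1,N^1)\red_{\chi^C}$ to that of $(W,M)$. Injectivity of $\chi$ on sorts and nominals is exactly what prevents these cores from being artificially collapsed on the $\Delta^1$-side, which is the feature whose failure drives Example~\ref{ex:counter-1}.

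Next I would define the expansion $(W^1,M^1)$ of $(W,M)$ by interpreting every symbol of $\Delta^1(C^1)$ lying outside the image of $\chi^C$ through $\theta$: the worlds remain those of $W$; the extra nominals, modalities and rigid constants are read off $(V^1,N^1)$ and pulled back along $\theta$; carriers of sorts in the image of $\chi$ are kept as in $(W,M)$, while carriers of the remaining (necessarily new) sorts of $\Delta^1$ are copied from $(V^1,N^1)$; and each new function or relation symbol is interpreted by transporting its $(V^1,N^1)$-interpretation through $\theta$ on reachable old arguments, copying it on new-sort arguments, and extending it arbitrarily over unreachable flexible arguments. The hypothesis that $\chi$ \emph{protects flexible symbols} is precisely what makes this coherent on flexible sorts: since no new flexible operation targets an old flexible sort and $\chi$ is injective there, the reachable flexible elements living over old flexible sorts are exactly the $\chi$-images of those of $(W,M)$, so the ``kept'' data and the ``transported'' data never clash and no new reachable element is forced into a carrier that an expansion is not permitted to enlarge.

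Finally I would check that $(W^1,M^1)\red_{\chi^C}=(W,M)$ by construction, that $(W^1,M^1)$ is reachable (its worlds and rigid elements are named by $C^1$), and that $\theta$ extends to an isomorphism of the reachable cores of $(W^1,M^1)$ and $(V^1,N^1)$; then $(W^1,M^1)\equiv(V^1,N^1)$ follows by applying Lemma~\ref{lemma:reach-equiv} to both sides. I expect the main obstacle to be exactly the coherence of this transport for symbols with \emph{mixed arities} --- those whose input or output sorts straddle the image of $\chi$ and the genuinely new sorts, and those mixing rigid with flexible sorts --- where one must verify at once that the interpretation is well defined on the fixed carriers of $(W,M)$ and that it faithfully reproduces the atomic diagram of $(V^1,N^1)$. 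This is precisely the point where injectivity on sorts and protection of flexible symbols are indispensable, and where dropping either hypothesis reinstates the pathology of Example~\ref{ex:counter-1}.
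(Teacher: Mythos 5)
Your proposal is correct and takes essentially the same route as the paper's proof: the paper also builds an isomorphism on reachable parts from reachability plus elementary equivalence (its step~1), transports the $\Delta^1(C^1)$-structure along it using injectivity on sorts and protection of flexible symbols for well-definedness (its steps~2 and~3), and absorbs the unreachable flexible elements via Lemma~\ref{lemma:reach-equiv}. The only difference is bookkeeping: the paper works with full intermediate structures $(V,R)$ and $(V^1,R^1)$ and genuine isomorphisms $h$, $h^1$ rather than with reachable cores, but the underlying argument, including the ``mixed arity'' case analysis you flag as the main obstacle, is the same.
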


\begin{proof}
Let $(V^1,N^1)\in|\Mod(\Delta^1(C^1))|$ and
$(W,M)\in|\Mod(\Delta(C))|$ be Kripke structures such that
$(V^1,N^1)\red_\chi\equiv (W,M)$. To keep notation consistent, we let
$(V,N)$ be the reduct  $(V^1,N^1)\red_\chi$ of $(V^1,N^1)$, and we will
construct an expansion $(W^1,M^1)$ of $(W,M)$ such that $(W^1,M^1)\equiv(V^1,N^1)$ in three steps.

\begin{enumerate}[1)]
\item We construct an isomorphism $h:(W,M)\to (V,R)$, 
where $(V,R)$ is obtained from $(V,N)$ by replacing all unreachable elements by the unreachable elements from $(W,M)$.

Firstly, we define $h$ as a function, which implicitly means that we define the universe of $(V,R)$.
Let $k\in C_\nom$ be a nominal, $v\coloneqq V_k$ and $w\coloneqq W_k$. 

\noindent\textsc{Case $s\in S^\rigid$:} We define the set $R_{v,s}\coloneqq N_{v,s}$, and 
the function $h_{w,s}:M_{w,s}\to R_{v,s}$ by $h_{w,s}(M_{w,c})=N_{v,c}$ for all constants $c:\to s\in C$.
Since both $(W,M)$ and $(V,N)$ are reachable by $C$ and $(W,M)\equiv(V,N)$, the function $h_{w,s}:M_{w,s}\to R_{v,s}$ is bijective.

\noindent\textsc{Case} $s\in S^\flex$:
Let $R_{v,s}$ be the set obtained from $N_{v,s}$ by removing all unreachable elements and adding all unreachable elements from $M_{w,s}$.
We define $h_{w,s}:M_{w,s}\to R_{v,s}$ by $h_{w,s}(M_{w,t})=N_{v,t}$ 
for all rigid hybrid $\Delta(C)$-terms $t$ of sort $@_k s$, and
$h_{w,s}(e)=e$ for all unreachable elements $e\in M_{w,s}$.
Since $(W,M)\equiv(V,N)$, the function $h_{w,s}:M_{w,s}\to R_{v,s}$ is bijective. 

Secondly, we interpret the function and relation symbols from $\Delta(C)$ in $(V,R)$.
Let $k\in C_\nom$ be a nominal, $v\coloneqq V_k$ and $w\coloneqq W_k$.

\noindent\textsc{Case} $\sigma:\ari\to s\in F(C)$:
We define $R_{v,\sigma}:R_{v,\ari}\to R_{v,s}$ by $R_{v,\sigma}(e)=h_{w,s}(M_{w,\sigma}(h_{w,\ari}^{-1}(e)))$ for all elements $e\in R_{v,\ari}$.
Since $(W,M)\equiv(V,N)$, we have $R_{v,\sigma} (e) = N_{v,\sigma}(e)$ for all reachable elements $e\in N_{v,\ari}\cap R_{v,\ari}$.

\noindent\textsc{Case $\varpi:\ari\in P$}
We define $R_{v,\varpi}\coloneqq h_{w,\ari}(M_{w,\varpi})$.
Since $(W,M)\equiv(V,N)$, we have $e\in R_{v,\varpi}$ iff $e\in N_{v,\varpi}$ for all reachable elements $e\in N_{v,\ari}\cap R_{v,\ari}$. 
By construction, $h:(W,M)\to(V,R)$ is a homomorphism, and since it is bijective, $h:(W,M)\to(V,R)$ is an isomorphism.
\item We define an expansion $(V^1,R^1)$ of $(V,R)$ along $\chi$ such that $(V^1,R^1)\equiv(V^1,N^1)$.
Roughly, $(V^1,R^1)$ is obtained from $(V^1,N^1)$ by replacing all unreachable elements of sorts in $\chi(S^\flex)$ with unreachable elements of flexible sorts from $(V,R)$.
Concretely, $(V^1,R^1)$ is obtained from $(V^1,N^1)$ as follows:

\noindent\textsc{Case} $s_1\in \chi(S^\flex)$:
$R^1_{v,s_1}\coloneqq R_{v,\chi^{-1}(s_1)}$ for all $v\in|V^1|$,  which is well-defined since $\chi$ is injective on sorts.

\noindent\textsc{Case} $\sigma_1:\ari_1\to s_1\in \chi(F^\flex)$, where $\ari_1$ contains at least one sort from $\chi(S^\flex)$:
For all $v\in|V^1|$, $R^1_{v,\sigma_1}\coloneqq R_{v,\chi^{-1}(\sigma_1)}$.
Since $\chi$ protects flexible symbols, $\chi^{-1}(\sigma_1)$ is unique, which means that $R^1_{v,\sigma_1}$ is well-defined.
Also, we have $N^1_{v,\sigma_1}(e)=N_{v,\chi^{-1}(\sigma_1)}(e)=R_{v,\chi^{-1}(\sigma_1)}(e)$ for all reachable elements $e\in N^1_{v,\ari_1}$.

\noindent\textsc{Case} $\pi_1:\ari_1\in \chi(P^\flex)$, where $\ari_1$ contains at least one sort from $\chi(S^\flex)$:
For all $v\in|V^1|$, $R^1_{v,\pi_1}\coloneqq R_{v,\chi^{-1}(\pi_1)}$.
Since $\chi$ protects flexible symbols, $\chi^{-1}(\pi_1)$ is unique, which means that $R^1_{v,\pi_1}$ is well-defined.
Also, we have $e\in N^1_{v,\pi_1}=N_{v,\chi^{-1}(\pi_1)}$ iff $e\in R_{v,\chi^{-1}(\pi_1)}=R^1_{v,\pi_1}$ for all reachable elements $e\in N^1_{v,\ari_1}$.

\noindent\textsc{Case} $\sigma_1:\ari_1\to s_1\in F_1^\flex\setminus \chi(F^\flex)$, where $\ari_1$ has at least one sort from $\chi(S^\flex)$:
For all $v\in|V^1|$, 
the function $R^1_{v,\sigma_1}: R^1_{v,\ari_1}\to R^1_{v,s_1}$ is defined by 
\begin{itemize}
\item $R^1_{v,\sigma_1}(e)=N^1_{v,\sigma_1}(e)$ for all elements $e\in R^1_{v,\ari_1}\cap N^1_{v,\ari_1}$ and
\item $R^1_{v,\sigma_1}(e)$ is an arbitrary value in $R^1_{v,s_1}$ for all unreachable $e\in R^1_{v,\ari_1}\setminus N^1_{v,\ari_1}$.
\end{itemize}

\noindent\textsc{Case} $\pi_1:\ari_1\in P_1^\flex\setminus \chi(P^\flex)$, where $\ari_1$ contains at least one sort from $\chi(S^\flex)$: 
For all possible worlds $v\in|V^1|$, 
let $R^1_{v,\pi_1}\coloneqq\{e\in N^1_{v,\pi_1}\mid e \text{ is reachable}\}$. 
Now, since $\chi$ protects flexible symbols, $\chi$ preserves flexible symbols, which means that for all possible worlds $v\in |V|$ and all sorts $s\in S$,
\begin{center} 
$e\in N_{v,s}$ is unreachable iff $e\in N^1_{v,\chi(s)}$ is unreachable.
\end{center}
It follows that the reachable sub-structures of $(V^1,N^1)$ and $(V^1,R^1)$ coincide.
By Lemma~\ref{lemma:reach-equiv}, $(V^1,N^1)\equiv(V^1,R^1)$.
\item We define an isomorphism $h^1:(W^1,M^1)\to(V^1,R^1)$ by expanding $h:(W,M)\to (V,R)$ along $\chi$.

Firstly, we define $h^1$ as a function.
Let $h^1:W\to V$ be $h:W\to V$, which is  bijective.
Assume $k$ is a nominal in $C$ and let $w=W_k$.

\noindent\textsc{Case} $s_1\in \chi(S)$: 
Let $h^1_{w,s_1}\coloneqq h_{w,\chi^{-1}(s_1)}$, which is bijective.

\noindent\textsc{Case} $s_1\in S^1\setminus \chi(S)$:
$M^1_{w,s_1}\coloneqq R^1_{w,s_1}$ and $h^1_{w,s_1}:M^1_{w,s_1}\to R^1_{w,s_1}$ is the identity.

Secondly, we interpret the function and relation symbols from $\Delta^1(C^1)$ in $(W^1,M^1)$.
For any nominal or modality $x$ in $\Delta^1(C^1)$, we define  $W^1_x\coloneqq (h^1)^{-1}(V^1_x)$.
Take a nominal $k$ in $C^1$, and let $w\coloneqq W^1_k$ and $v\coloneqq V^1_k$.

\noindent\textsc{Case} $\sigma_1:\ari_1\to s_1\in F^1(C^1)$:
We define $M^1_{w,\sigma_1}:M^1_{w,\ari_1}\to M^1_{w,s_1}$ by 
$M^1_{w,\sigma_1}(e)=(h^1_{w,s_1})^{-1}(R^1_{v,\sigma_1}(h^1_{w,\ari_1}(e)))$ 
for all elements $e\in M^1_{w,\ari_1}$.

\noindent\textsc{Case} $\pi_1:\ari_1\in P^1$:
We define $M^1_{w,\pi_1}\coloneqq (h^1_{w,\ari_1})^{-1}(R^1_{\pi_1})$.\\
Since $h:(W,M)\to (V,R)$ is an isomorphism, $h^1:(W^1,M^1)\to (V^1,R^1)$ is an isomorphism too.
\end{enumerate}
It follows that $(W^1,M^1)\equiv(V^1,R^1)$. 
Since $(V^1,R^1)\equiv(V^1,N^1)$, we get $(W^1,M^1)\equiv(V^1,N^1)$.
\end{proof}

Definition~\ref{def:flex} provides a general criterion for proving Robinson consistency property, 
while Lemma~\ref{lemma:lifting} is essential for completing the proof of Robinson consistency theorem.

\section{Relativization} \label{sec:relat}
Relativization is a well-known method in classical model theory for defining substructures and their properties~\cite{DBLP:books/daglib/0080659}. 
The substructures are usually characterized by some unary predicate and the technique is necessary in the absence of sorts when dealing with modular properties (such as putting together models defined over different signatures) which implicitly involve signature morphisms.
For $\HFOL$, the relativization is necessary to prove Robinson consistency property from omitting types property, since the signatures of nominals are single-sorted.
It is worth mentioning that relativization is not necessary to prove Robinson consistency for many-sorted first-order logic.
\begin{definition} \label{def:amalg}
The \emph{relativized union} of any signatures $\Delta^1$ and $\Delta^2$ is a presentation $(\Delta^\diamond,\Phi^\diamond)$ defined as follows:
\begin{enumerate}[1)]
\item $\Delta^\diamond$ is the signature obtained from $\Delta^1\coprod\Delta^2$ by adding 
two nominals $o_1$ and $o_2$, and
two unary modalities ${\pi_1:1}$ and ${\pi_2:1}$,

\item $\Phi^\diamond\subseteq\Sen(\Delta^\diamond)$ consists of $\pi_1\vee\pi_2$ and all sentences of the form $\at{k_i}\pi_i$, where $i\in\{1,2\}$ and  $k_i\in F^\nom_i\cup\{o_i\}$.
\end{enumerate}
\end{definition}

Let $\inj_i:\Delta^i\to \Delta^1\coprod\Delta^2$ be the canonical injection, for each $i\in\{1,2\}$.
Let $\theta:\Delta^1\coprod\Delta^2\hookrightarrow \Delta^\diamond$ be an inclusion.
Here, we are interested more in the vertex $\Delta^\diamond$ and less in the arrows $(\inj_i;\theta)$, where $i\in\{1,2\}$.
The presentation $(\Delta^\diamond,\Phi^\diamond)$ is meant to define Kripke structures obtained from the union of a Kripke structure over $\Delta^1$ and a Kripke structure over $\Delta^2$.
The new nominals $o_1$ and $o_2$ together with the sentences $\at{o_1} \pi_1$ and $\at{o_2}\pi_2$ ensure that the domains of $\pi_1$ and $\pi_2$ are not empty. 
For each nominal $k_1\in F_1^\nom$, the sentence $\at{k_1}\pi_1$ ensures that the interpretation of $k_1$ belongs to the denotation of $\pi_1$. 
A similar remark holds for any sentence $\at{k_2}\pi_2$ with $k_2\in F_2^\nom$.
For the sake of simplifying the notation, we assume without loss of generality that $\Delta^1$ and $\Delta^1$ are disjoint, which means that $\Delta^1\coprod \Delta^2=\Delta^1\cup \Delta^2$.

\begin{definition}
Let $\Delta^1$ and $\Delta^2$ be two disjoint signatures.
For each $i\in\{1,2\}$, the \emph{relativized reduct} $\red_{\pi_i}:\Mod(\Delta^\diamond)\to\Mod(\Delta^i)$ is defined as follows:
\begin{enumerate}[1)]
\item For each $(W,M)\in|\Mod(\Delta^\diamond,\Phi^\diamond)|$, the Kripke structure $(W,M)\red_{\pi_i}$ denoted $(W^i,M^i)$ is defined by
(a)~$|W^i|=  W_{\pi_i}$, 
(b)~$W^i_k =W_k$ for all nominals $k\in F^\nom_i$, 
(c)~$W^i_\varrho=W_\varrho\cap  W_{\pi_i}$ for all unary modalities $(\varrho:\nom)\in P_i^\nom$,
(d)~$W^i_\lambda=\{(w,v)\in W_\lambda \mid  w,v\in  W_{\pi_i} \}$ for all modalities $(\lambda:\nom~\nom)\in P^\nom_i$,
(e)~$M^i= M|_{W^i}$ and $M^i_{w,x}=M_{w,x}$ for all possible worlds $w\in |W^i|$ and all sort/function/relation symbols $x$ in $\Sigma_i$.

\item For each $h:(W,M)\to (W',M')\in \Mod(\Delta^\diamond,\Phi^\diamond)$, the homomorphism $h\red_{\pi_i}:(W,M)\red_{\pi_i}\to (W',M')\red_{\pi_i}$ is defined by $(h\red_{\pi_i})_w=h_w$ for all $w\in  W_{\pi_i}$.
\end{enumerate}
\end{definition}

\begin{definition}
The \emph{relativized translation} 
$\rt(\pi_i):\Sen(\Delta^i)\to \Sen(\Delta^\diamond,\Phi^\diamond)$, 
where $i\in\{1,2\}$, is defined by induction on the structure of sentences, simultaneously, for all disjoint signatures $\Delta^1$ and $\Delta^2$:
\begin{enumerate}[1)]
\item $\rt(\pi_i)(k)\coloneqq \pi_i\Rightarrow k$ for all nominals $k\in F^\nom_i$.
\item $\rt(\pi_i)(\varrho)\coloneqq \pi_i\Rightarrow \varrho$ for all unary modalities $(\varrho:\nom)\in P_i^\nom$.
\item $\rt(\pi_i)(t_1=t_2)\coloneqq \pi_i\Rightarrow t_1 = t_2$ for all ground equations $t_1=t_2$ over $\Delta^i$.
\item $\rt(\pi_i)(\varpi(t_1,\dots,t_n))\coloneqq \pi_i\Rightarrow \varpi(t_1,\dots t_n)$ for all ground relations $\varpi(t_1,\dots,t_n)$ over $\Delta^i$.
\item $\rt(\pi_i)(\at{k}\gamma)\coloneqq \pi_i\Rightarrow \at{k}\rt(\pi_i)(\gamma)$ for all nominals $k\in F^\nom_i$ and all sentences $\gamma\in\Sen(\Delta^i)$. 
\item $\rt(\pi_i)(\pos{\lambda}\gamma)\coloneqq \pi_i\Rightarrow \pos{\lambda}\pi_i\wedge \rt(\pi_i)(\gamma)$ for all modalities $(\lambda:\nom~\nom) \in P_i^\nom$ and all sentences $\gamma\in \Sen(\Delta^i)$.
\item $\rt(\pi_i)(\neg \gamma) \coloneqq \pi_i \Rightarrow \neg\rt(\pi_i)(\gamma)$ for all $\gamma\in\Sen(\Delta^i)$.

%
\item $\rt(\pi_i)(\gamma_1\vee\gamma_2)\coloneqq \rt(\pi_i)(\gamma_1)\vee \rt(\pi_i)(\gamma_2)$ for all $\gamma_1,\gamma_2\in\Sen(\Delta^i)$.

\item \label{it:rt-10} 
$\rt(\pi_i)(\store{z}\gamma)\coloneqq \pi_i\Rightarrow \store{z} \rt(\pi_i)(\gamma)$ for all sentences $\store{z}\gamma\in \Sen(\Delta^i)$, where $z$ a nominal variable.
\footnote{Notice that $\Delta^\diamond(z)$ is obtained from the relativized union of $\Delta^i(z)$ and $\Delta^j$, where $i,j\in\{1,2\}$ and $i\neq j$; therefore, $\rt(\pi)(\gamma)$ is well-defined.}

%
%
%
%

%
\item $\rt(\pi_i)(\Exists{x}\gamma)\coloneqq \pi_i\Rightarrow \Exists{x} \at{x} \pi_i \wedge \rt(\pi_i)(\gamma)$ for all  sentences $\Exists{x}\gamma\in \Sen(\Delta^i)$ with $x$ a nominal variable.
\item $\rt(\pi_i)(\Exists{y}\gamma) \coloneqq \pi_i\Rightarrow \Exists{y} \rt(\pi_i)(\gamma)$ for all  sentences $\Exists{y}\gamma\in \Sen(\Delta^i)$ with $y$ a rigid variable.
\end{enumerate}
\end{definition}

Simultaneous induction for all disjoint signatures is necessary for the case corresponding to quantified sentences.
Unlike first-order logic, relativization is applied not only to quantifiers but it starts with atomic sentences. 
One can notice that locally the antecedent $\pi_i$ of the implication is redundant, but globally it is not.

\begin{proposition} [Satisfaction condition] \label{prop:sat-cond}
For all disjoint signatures $\Delta^1$ and $\Delta^2$,
all Kripke structures $(W,M)\in|\Mod(\Delta^\diamond,\Phi^\diamond)|$ and 
all sentences $\gamma\in\Sen(\Delta^i)$,
where $i\in\{1,2\}$, 
the following \emph{satisfaction conditions} hold:
\begin{itemize}
\item For all worlds $w\in  W_{\pi_i}$, 
we have $(W,M)\models^w \rt(\pi_i)(\gamma)$ iff $(W,M)\red_{\pi_i} \models^w \gamma$.

\item For all worlds $w\in|W|\setminus W_{\pi_i}$, 
we have $(W,M)\models^w\rt(\pi_i)(\gamma)$.

\item $(W,M)\models \rt(\pi_i)(\gamma)$ iff $(W,M)\red_{\pi_i}\models \gamma$.
\end{itemize}
\end{proposition}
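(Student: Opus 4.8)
The plan is to establish the first two (local) statements by a single structural induction on $\gamma$, run \emph{simultaneously over all disjoint pairs} $\Delta^1,\Delta^2$, and to read off the third (global) statement from them. For the latter, $(W,M)\models\rt(\pi_i)(\gamma)$ asks for satisfaction at every world; the second bullet disposes of all $w\notin W_{\pi_i}$ automatically, so the condition is equivalent to satisfaction at every $w\in W_{\pi_i}$, which by the first bullet is $(W,M)\red_{\pi_i}\models^w\gamma$ for every $w\in W_{\pi_i}=|W^i|$, i.e.\ $(W,M)\red_{\pi_i}\models\gamma$. The second bullet is itself easy: every clause of $\rt(\pi_i)$ other than disjunction has the form $\pi_i\Rightarrow(\cdots)$ and so holds vacuously at a world where $\pi_i$ fails, while the disjunction clause follows by applying the induction hypothesis to each disjunct.

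The bulk of the induction is the first bullet, where $w\in W_{\pi_i}$ validates the guard $\pi_i$ and the implication collapses to its consequent. The atomic cases ($k$, $\varrho$, $t_1=t_2$, $\varpi(t)$) unfold directly from clauses (a)--(e) of the relativized reduct, which force $(W,M)$ and $(W,M)\red_{\pi_i}$ to agree at every world of $W_{\pi_i}$. Negation and disjunction are immediate from the induction hypothesis at the same $w$; it is exactly the guard $\pi_i$ that lets negation commute with relativization. The retrieve case $\at{k}\gamma'$ is the first place the presentation $\Phi^\diamond$ is used: the axiom $\at{k}\pi_i\in\Phi^\diamond$ puts $W_k$ inside $W_{\pi_i}$, so the induction hypothesis applies at $W_k$, and $W^i_k=W_k$ closes the case. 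For $\pos{\lambda}\gamma'$, the conjunct $\pi_i$ inside $\pos{\lambda}(\pi_i\wedge\cdots)$ restricts the accessible witness to $W_{\pi_i}$, matching the relativized accessibility $W^i_\lambda=\{(w,v)\in W_\lambda\mid w,v\in W_{\pi_i}\}$ exactly, after which the induction hypothesis at the witness world finishes it.

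The store and quantifier cases are the crux, and the reason for the simultaneous formulation. For $\store{z}\gamma'$ the body lives over $\Delta^i(z)$ and its translation over $\Delta^\diamond(z)$, which is the relativized union of $\Delta^i(z)$ and $\Delta^j$; I would invoke the induction hypothesis for \emph{that} pair applied to the expansion $(W^{z\leftarrow w},M)$, first checking that this expansion satisfies the enlarged presentation (here $w\in W_{\pi_i}$ is precisely what validates the new axiom $\at{z}\pi_i$), and then noting that the relativized reduct commutes with interpreting $z$ as $w$. The nominal quantifier $\Exists{x}\gamma'$ is analogous, with the added conjunct $\at{x}\pi_i$ playing the role $\Phi^\diamond$ played for retrieve: it confines the witness world to $W_{\pi_i}=|W^i|$, yielding a bijective correspondence between the relevant expansions of $(W,M)$ and of $(W,M)\red_{\pi_i}$ (in the backward direction one lifts a witness $u\in W_{\pi_i}$ and checks the lifted expansion still models $\Phi^\diamond$). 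The rigid quantifier $\Exists{y}\gamma'$ needs no guard, since $y$ ranges over a shared rigid domain that the relativized reduct leaves untouched.

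The main obstacle I anticipate is maintaining the invariant that every world-shifting operator keeps us inside $W_{\pi_i}$: in each of the $\at{k}$, $\pos{\lambda}$ and $\Exists{x}$ cases one must verify that the target world is forced into $W_{\pi_i}$ (by $\Phi^\diamond$, by the guard in the consequent, or by the $\at{x}\pi_i$ conjunct, respectively), and for store and quantification that the expansions used still model the enlarged $\Phi^\diamond$, so that the simultaneous induction hypothesis genuinely applies.
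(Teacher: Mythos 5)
Your proposal is correct and takes essentially the same approach as the paper: the paper likewise proves the first two (local) statements by structural induction on sentences---remarking, just as you do, that the induction must be carried out simultaneously for all disjoint signature pairs in order to handle store and quantification---and obtains the third (global) statement as a consequence of the first two. Your write-up merely fills in the case analysis (guards, the role of $\Phi^\diamond$ in the retrieve case, the witness-confining conjuncts for $\pos{\lambda}$ and $\Exists{x}$) that the paper dismisses as straightforward.
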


The first two statements of Proposition~\ref{prop:sat-cond} are straightforward by induction on the structure of sentences. 
The third statement, which corresponds to the global satisfaction condition, is a consequence of the first two statements.

\begin{proposition} \label{prop:amalg}
For all disjoint signatures $\Delta^1$ and $\Delta^2$, and 
all Kripke structures $(W^1,M^1)$ and $(W^2,M^2)$ over $\Delta^1$ and $\Delta^2$, respectively, 
there exists $(W,M)\in|\Mod(\Delta^\diamond,\Phi^\diamond)|$ called the \emph{relativized union} of $(W^1,M^1)$ and $(W^2,M^2)$ such that $(W,M)\red_{\pi_1}=(W^1,M^1)$ and $(W,M)\red_{\pi_2}=(W^2,M^2)$.
\end{proposition}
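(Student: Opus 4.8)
The plan is to construct $(W,M)$ explicitly as a disjoint union of the two given structures, using the fresh modalities $\pi_1,\pi_2$ to mark which worlds come from which side, and then to verify the three required properties by unwinding the definitions of $\red_{\pi_i}$ and of $\Phi^\diamond$. Assume without loss of generality that $|W^1|$ and $|W^2|$ are disjoint (rename otherwise), and set $|W|\coloneqq|W^1|\cup|W^2|$, which is non-empty. I would interpret the new modalities by $W_{\pi_1}\coloneqq|W^1|$ and $W_{\pi_2}\coloneqq|W^2|$, so that every world lies in exactly one of them; each nominal $k\in F^\nom_i$ by $W_k\coloneqq W^i_k$; and each modality $\varrho,\lambda\in P^\nom_i$ by $W_\varrho\coloneqq W^i_\varrho$ and $W_\lambda\coloneqq W^i_\lambda$. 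Note that all of these denotations already lie inside $|W^i|=W_{\pi_i}$. Since the two frames are non-empty, I can finally pick any $W_{o_1}\in|W^1|$ and any $W_{o_2}\in|W^2|$ for the two new nominals.

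Next I would define the model $M$ world by world. On a world $w\in|W^1|$, let $M_w$ interpret every symbol of $\Sigma_1$ exactly as $M^1_w$ does; on the symbols of $\Sigma_2$, interpret the rigid ones by the (world-independent) interpretation that $\Sigma^\rigid_2$ receives in $(W^2,M^2)$, and interpret the flexible ones arbitrarily, e.g.\ by copying some fixed world of $W^2$. Symmetrically on worlds $w\in|W^2|$. The point to check here is the rigidity requirement on Kripke structures: the rigid part of $\Delta^\diamond$ is $\Sigma^\rigid_1\cup\Sigma^\rigid_2$, and since $\Delta^1,\Delta^2$ are disjoint, each of these two parts is interpreted by a single fixed structure across all of $|W|$, so $M_{w_1}\red_{\Sigma^\rigid_1\cup\Sigma^\rigid_2}=M_{w_2}\red_{\Sigma^\rigid_1\cup\Sigma^\rigid_2}$ for all $w_1,w_2\in|W|$, as required.

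Verifying the three claims is then a matter of unwinding definitions. For $(W,M)\models\Phi^\diamond$: the sentence $\pi_1\vee\pi_2$ holds at every world because $|W|=W_{\pi_1}\cup W_{\pi_2}$, and each $\at{k_i}\pi_i$ holds because $W_{k_i}\in|W^i|=W_{\pi_i}$, using that both the old nominals and the new nominals $o_i$ were placed on the correct side. For $(W,M)\red_{\pi_i}=(W^i,M^i)$: by the definition of the relativized reduct one has $|(W,M)\red_{\pi_i}|=W_{\pi_i}=|W^i|$, while clauses (b)--(e) recover $W^i_k$, $W^i_\varrho=W_\varrho\cap W_{\pi_i}$, $W^i_\lambda$ and $M^i$ exactly, precisely because the denotations of the $\Delta^i$-symbols were confined to $W_{\pi_i}$ and $M$ agrees with $M^i$ on $\Sigma_i$-symbols at those worlds.

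The only genuine subtlety---hence the step I would treat most carefully---is the interaction with rigidity described above: in $\Delta^\diamond$ the $\Delta^1$-rigid symbols must also be given a constant interpretation on the $\Delta^2$-worlds and vice versa, and one must confirm that doing so does not disturb the relativized reducts. It does not, since $\red_{\pi_1}$ discards the $\Delta^2$-worlds entirely and reads off only the $\Sigma_1$-symbols, so the arbitrary choices made for the foreign symbols are invisible to both reducts. Everything else is routine bookkeeping.
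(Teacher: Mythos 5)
The paper states Proposition~\ref{prop:amalg} without proof (it only remarks afterwards that the relativized union is not unique), so there is no official argument to compare against; your construction is clearly the intended one and it works. You mark the two world-sets by $W_{\pi_1}=|W^1|$, $W_{\pi_2}=|W^2|$, keep every $\Delta^i$-symbol's interpretation on its own side, give the foreign rigid symbols the fixed, world-independent interpretation they already have in the other structure, interpret foreign flexible symbols arbitrarily, and then check $\Phi^\diamond$ and clauses (a)--(e) of the relativized reduct by unwinding definitions. Your isolation of the rigidity constraint as the one non-trivial point is exactly right, and the arbitrary choices you make (foreign flexible symbols, the denotations of $o_1,o_2$) are precisely the source of the non-uniqueness the paper mentions.

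One caveat is worth fixing: your opening ``assume WLOG that $|W^1|$ and $|W^2|$ are disjoint (rename otherwise)'' is not actually without loss of generality for the statement as written, because the conclusion demands $(W,M)\red_{\pi_2}=(W^2,M^2)$ on the nose, and renaming the worlds of $(W^2,M^2)$ only produces a structure whose $\pi_2$-reduct equals an isomorphic copy of $(W^2,M^2)$. The repair costs nothing: drop the renaming and allow $W_{\pi_1}$ and $W_{\pi_2}$ to overlap --- nothing in $\Phi^\diamond$ or in the definition of $\red_{\pi_i}$ forces the two domains to be disjoint, only that their union be $|W|$. On a shared world $w\in|W^1|\cap|W^2|$ interpret the $\Sigma_1$-symbols via $M^1_w$ and the $\Sigma_2$-symbols via $M^2_w$; since the signatures $\Delta^1$ and $\Delta^2$ are disjoint (this is a hypothesis of the proposition), these two prescriptions never conflict, and the rigidity check and both reduct computations go through verbatim as in your argument.
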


The relativized union of Kripke structure is not unique.

\section{Robinson consistency} \label{sec:rc}
Robinson consistency property is derived from Omitting Types Theorem, which was proved in \cite{gai-hott} for $\HFOL$.
We start by defining the semantic opposite of a sentence.
In first-order logic the semantic opposite of a sentence is its negation.
\begin{definition}
Given a sentence $\psi$ over a signature $\Delta$, we let 
(a)~$+\psi$ denote the sentence $\Forall{z^\circ}\at{z^\circ}\psi$,
(b)~$-\psi$ denote the sentence $\Exists{z^\circ}\at{z^\circ}\neg\psi$, and
(c)~$\pm\psi$ range over $\{+\psi,-\psi\}$,
where $z^\circ$ is a distinguished nominal variable for $\Delta$.
\end{definition}

The proof of the following lemma is straightforward.

\begin{lemma} \label{lemma:+}
For all Kripke structures $(W,M)$ and all sentences $\psi$ over a signature $\Delta$, we have 
(a)~$(W,M) \models \psi$ iff $(W,M)\models +\psi$ iff $(W,M)\models^w+\psi$ for some possible world $w\in|W|$, and
(b)~$(W,M)\not\models \psi$ iff $(W,M)\models -\psi$.
\end{lemma}

By Lemma~\ref{lemma:+}, the satisfaction of $+\psi$ does not depend on the  possible world where the sentence $+\psi$ is evaluated. The same comment holds for $-\psi$ too.

\subsection{Framework}
We set the framework in which Robinson consistency property is proved.
Let $(\Delta^1,\Phi^1)\stackrel{\chi_1}\leftarrow(\Delta,\Phi)\stackrel{\chi_2}\to(\Delta^2,\Phi^2)$ be a span of presentation morphisms such that
(a)~$\chi_2$ is injective on sorts and nominals, and protects flexible sorts,
(b)~$\Phi$ is maximally consistent over $\Delta$, and 
(c)~$\Phi^i$ is consistent over $\Delta^i$ for each $i\in\{1,2\}$.

Assume a set of new rigid constants $C$ for $\Delta$ such that 
$\card(C_s)=\alpha$ for all rigid sorts $s\in S^\rigid$, where $\alpha\coloneqq max\{\card(\Sen(\Delta^1)),\card(\Sen(\Delta^2))\}$.
For each $i\in\{1,2\}$, 
let $C^i$ be the set of new constants for $\Delta^i$ obtained 
by renaming the translation of the constants in $C$ along $\chi_i$ and
by adding a set of new constants $C^i_{s^i}$ of cardinality $\alpha$ for each rigid sort $s^i\in S^\rigid_i$ outside the image of $\chi_i$:
\begin{center}
$C^i\coloneqq \{c^i:\to\chi_i(s) \mid c:\to s\in C\} \cup (\bigcup_{s^i\in S^\rigid_i \setminus \chi_i(S^\rigid)} C^i_{s^i})$
\end{center}
where
(a)~each constant $c^i:\to\chi_i(s)$ is the renaming of a constant $c:\to s\in C$, and
(b)~$C^i_{s^i}$ is a set of new constants of sort $s^i$ for all rigid sorts $s^i\in S^\rigid_i \setminus \chi_i(S^\rigid)$.
Let $\chi_i^C:\Delta(C)\to\Delta^i(C^i)$ be the extension of $\chi_i:\Delta\to\Delta^i$ to $\Delta(C)$ which maps each constant $c:\to s \in C$ to its renaming $c^i:\to \chi_i(s)\in C^i$.
Without loss of generality we assume that $\Delta^1(C^1)$ and $\Delta^2(C^2)$ are disjoint.

\begin{figure}[h]
\centering
\begin{tikzcd}
\Delta(C) \ar[rr,"\chi_1^C"] \ar[rd,"\chi_2^C "] & & \Delta^1(C^1) \ar[rd,dotted,"(\_)^{\pi_1}"] & \\ 
& \Delta^2(C^2)  \ar[rr,dotted,"(\_)^{\pi_2}", near start] & & (\Delta^\diamond(C^\diamond),\Phi_C^\diamond) \\
& & & \\
\Delta \ar[uuu,hook] \ar[rr,"\chi_1" near start,dashed] \ar[rd,swap, "\chi_2"] & & \Delta^1 \ar[uuu,dashed,hook] \ar[dr,dotted,"(\_)^{\pi_1}"]&\\ 
 & \Delta^2 \ar[uuu,hook] \ar[rr,dotted,"(\_)^{\pi_2}"] & & (\Delta^\diamond,\Phi^\diamond) \ar[uuu,hook] \\
\end{tikzcd}
\caption{}
\label{fig:frame}
\end{figure}
In Figure~\ref{fig:frame},
$(\Delta^\diamond,\Phi^\diamond)$ is the relativized union of $\Delta^1$ and $\Delta^2$, 
while $(\Delta^\diamond(C^\diamond),\Phi_C^\diamond)$ is the relativized union of $\Delta^1(C^1)$ and $\Delta^2(C^2)$.
The definitions of $C$, $C^1$ and $C^2$ are unique up to isomorphism and they are essential for the proof of Robinson consistency theorem.

\begin{notation}[Semantics] 
For each $(W,M)\in|\Mod(\Delta^\diamond,\Phi^\diamond)|$, we let
\begin{itemize}
\item $(W^1,M^1)$ and $(W^2,M^2)$ denote $(W,M)\red_{\pi_1}$ and $(W,M)\red_{\pi_2}$, respectively;
\item $(W^a,M^a)$ and $(W^b,M^b)$ denote $(W^1,M^1){\red_{\chi_1}}$ and $(W^2,M^2){\red_{\chi_2}}$, respectively.
\end{itemize}
Moreover, let $(V,N)$ be a Kripke structure over $\Delta^\diamond(C^\diamond)$ and adopt a similar convention as above for its reducts.
\end{notation}

\begin{notation}[Syntax] Let $i\in\{1,2\}$ and $\psi\in\Sen(\Delta(C))$.

\begin{itemize}
\item Let $\psi^i$ and $\psi^{\pi_i}$ denote $\chi_i^C(\psi)$ and $\rt(\pi_i)(\chi_i^C(\psi))$, respectively. 
\item Let $\Phi^{\pi_i}$  denote $\rt(\pi_i)(\Phi^i)$.
\end{itemize}
\end{notation}

\subsection{Results}
This section contains the main results, which are Robinson consistency theorem and its corollaries. 
The following lemma is crucial for subsequent developments and it is a consequence of Proposition~\ref{prop:sat-cond} and Lemma~\ref{lemma:+}.

\begin{lemma}\label{lemma:bicond}
For all $(V,N)\in|\Mod(\Delta^\diamond(C^\diamond))|$ and all $\psi\in\Sen(\Delta(C))$,
\begin{center}
$(V,N)\models +(+\psi)^{\pi_1} \Leftrightarrow  +(+\psi)^{\pi_2} 
\text{ iff }
\Big((V^a,N^a)\models \psi \text{ iff }(V^b,N^b) \models \psi \Big)$.
\end{center}
\end{lemma}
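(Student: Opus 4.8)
The plan is to evaluate the two sides of the biconditional in $(V,N)\models +(+\psi)^{\pi_1}\Leftrightarrow +(+\psi)^{\pi_2}$ independently, showing that $(V,N)\models +(+\psi)^{\pi_i}$ reduces to $(V^a,N^a)\models\psi$ when $i=1$ and to $(V^b,N^b)\models\psi$ when $i=2$, and then to argue that global satisfaction of the biconditional factors as the biconditional of these two global satisfactions. Everything is driven by the three available ingredients: Lemma~\ref{lemma:+}, the relativized satisfaction condition (Proposition~\ref{prop:sat-cond}), and the satisfaction condition for signature morphisms.

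First I would peel off the operators of $+(+\psi)^{\pi_i}=+\,\rt(\pi_i)\big(\chi_i^C(+\psi)\big)$ one layer at a time. Since a $+$-sentence is globally satisfied exactly when its argument is (Lemma~\ref{lemma:+}(a)), the outermost $+$ is dropped, so the question becomes $(V,N)\models (+\psi)^{\pi_i}=\rt(\pi_i)\big(\chi_i^C(+\psi)\big)$. The global relativized satisfaction condition then transfers this to the relativized reduct, giving $(V^i,N^i)\models \chi_i^C(+\psi)$ with $(V^i,N^i)=(V,N)\red_{\pi_i}$. Because translation commutes with the $+$-construction, $\chi_i^C(+\psi)=+\,\chi_i^C(\psi)$, so a second application of Lemma~\ref{lemma:+}(a) removes the inner $+$ and leaves $(V^i,N^i)\models \chi_i^C(\psi)$. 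Finally the (local, hence global) satisfaction condition for the signature morphism $\chi_i^C$ rewrites this as $(V^i,N^i)\red_{\chi_i^C}\models\psi$, which by the Semantics convention is exactly $(V^a,N^a)\models\psi$ for $i=1$ and $(V^b,N^b)\models\psi$ for $i=2$. This chain of equivalences is routine; each link is an instance of one of the cited results.

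The step requiring care, and the main obstacle to state cleanly, is the passage from global satisfaction of the biconditional to the biconditional of the two global satisfactions, since in hybrid logic $(V,N)\models\alpha\Leftrightarrow\beta$ does not in general coincide with $\big((V,N)\models\alpha \text{ iff } (V,N)\models\beta\big)$. This is precisely why the outermost $+$ is present in both $\alpha=+(+\psi)^{\pi_1}$ and $\beta=+(+\psi)^{\pi_2}$: as observed immediately after Lemma~\ref{lemma:+}, the truth value of a $+$-sentence at a world is independent of the world. Thus for every world $w$ we have $(V,N)\models^w\alpha$ iff $(V,N)\models\alpha$, and similarly for $\beta$, so evaluating the Boolean biconditional locally returns the same verdict at each world; hence $(V,N)\models\alpha\Leftrightarrow\beta$ holds iff $\big((V,N)\models\alpha \text{ iff } (V,N)\models\beta\big)$. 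Substituting the reductions of the previous paragraph for $(V,N)\models\alpha$ and $(V,N)\models\beta$ then yields the asserted equivalence with $\big((V^a,N^a)\models\psi \text{ iff } (V^b,N^b)\models\psi\big)$, completing the argument.
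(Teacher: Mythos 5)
Your proof is correct and follows exactly the route the paper intends: the paper gives no detailed argument for this lemma, noting only that it is ``a consequence of Proposition~\ref{prop:sat-cond} and Lemma~\ref{lemma:+}'', and your chain of equivalences (peeling off the outer and inner $+$ via Lemma~\ref{lemma:+}, transferring across $\rt(\pi_i)$ and across $\chi_i^C$ via the relativized and ordinary satisfaction conditions) is precisely the instantiation of that remark. Your second paragraph, observing that the world-independence of $+$-sentences is what allows the object-level biconditional $+(+\psi)^{\pi_1}\Leftrightarrow +(+\psi)^{\pi_2}$ to collapse to the meta-level ``iff'' of the two global satisfactions, correctly supplies the one subtle point the paper leaves implicit.
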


Lemma~\ref{lemma:bicond} says that $(V,N)$ globally satisfies $+(+\psi)^{\pi_1} \Leftrightarrow  +(+\psi)^{\pi_2}$ for all $\Delta(D)$-sentences~$\psi$ iff
$(V^a,N^a)$ and $(V^b,N^b)$ are elementarily equivalent.

\begin{notation}
We define the following set of sentences over $\Delta^\diamond(C^\diamond)$: 
\begin{center} 
$T_C\coloneqq \Phi_C^\diamond \cup\Phi^{\pi_1} \cup \Phi^{\pi_2} \cup 
\{ +(+\psi)^{\pi_1} \Leftrightarrow  +(+\psi)^{\pi_2} \mid \psi\in\Sen(\Delta(C))\}$.
\end{center}
\end{notation}

Notice that $T_C$ describes Kripke structures $(V,N)$ over $\Delta^\diamond(C^\diamond)$ obtained from the relativized union of some Kripke structures $(V^1,N^1)\in|\Mod(\Delta^1(C^1),\Phi^1)|$ and $(V^2,N^2)\in|\Mod(\Delta^2(C^2),\Phi^2)|$ such that $(V^a,N^a)$ and $(V^b,N^b)$ are elementarily equivalent.

\begin{proposition} \label{prop:cons}
$T_C$ is consistent.
\end{proposition}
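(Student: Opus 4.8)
The plan is to read off, from Lemma~\ref{lemma:bicond}, Proposition~\ref{prop:amalg} and the relativized satisfaction condition (Proposition~\ref{prop:sat-cond}), exactly what a model of $T_C$ amounts to, and then build one. By Proposition~\ref{prop:amalg} any two Kripke structures $(V^1,N^1)$ over $\Delta^1(C^1)$ and $(V^2,N^2)$ over $\Delta^2(C^2)$ have a relativized union $(V,N)\in|\Mod(\Delta^\diamond(C^\diamond),\Phi_C^\diamond)|$, which automatically satisfies $\Phi_C^\diamond$ and, by the relativized satisfaction condition, satisfies $\Phi^{\pi_i}$ iff $(V^i,N^i)\models\Phi^i$; by Lemma~\ref{lemma:bicond} it satisfies every biconditional $+(+\psi)^{\pi_1}\Leftrightarrow +(+\psi)^{\pi_2}$ iff the reducts $(V^a,N^a)$ and $(V^b,N^b)$ are elementarily equivalent. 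Hence it suffices to exhibit models $(V^1,N^1)\models\Phi^1$ and $(V^2,N^2)\models\Phi^2$ whose $\Delta(C)$-reducts satisfy $(V^a,N^a)\equiv(V^b,N^b)$, and then glue them via Proposition~\ref{prop:amalg}.

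To produce them, first I would fix an arbitrary $(V^1,N^1)\models\Phi^1$ over $\Delta^1(C^1)$, which exists because $\Phi^1$ is consistent, interpreting the constants $C^1$ in any way. Let $T^a$ be the complete $\Delta(C)$-theory of its reduct $(V^a,N^a)$. The core claim is that $\Phi^2\cup\chi_2^C(T^a)$ is consistent over $\Delta^2(C^2)$; granting this, any model $(V^2,N^2)$ of it has, by the local satisfaction condition, $(V^b,N^b)=(V^2,N^2)\red_{\chi_2^C}\models T^a$, so $(V^b,N^b)\equiv(V^a,N^a)$ since $T^a$ is complete, and we are done by the reformulation above.

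I would establish the claim by compactness of $\HFOL$, so that it is enough to treat a single $\psi\in T^a$ (finite subsets of $T^a$ being subsumed by their conjunctions). Let $C'\subseteq C$ be the finitely many constants occurring in $\psi$; since every symbol of $C$ is a nominal or a rigid constant, replacing $C'$ by fresh variables $X$ of the corresponding sorts in $S^\ext$ turns $\psi$ into a $\Delta(X)$-sentence $\psi'$, and $\theta\coloneqq +\Exists{X}\psi'$ is a genuine $\Delta$-sentence. Because $(V^a,N^a)\models\psi$, the interpretations of $C'$ witness the existential, so $(V^a,N^a)\red_\Delta\models\theta$. Moreover $(V^a,N^a)\red_\Delta\models\Phi$: since $\chi_1$ is a theory morphism, $\Phi^1\models\chi_1(\Phi)$, hence $(V^1,N^1)\models\chi_1(\Phi)$, and the local satisfaction condition gives $(V^a,N^a)\red_\Delta\models\Phi$. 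As $\Phi$ is maximally consistent over $\Delta$ and $\theta$ is a $\Delta$-sentence holding in a model of $\Phi$, completeness yields $\Phi\models\theta$. Applying the theory morphism $\chi_2$ (so $\Phi^2\models\chi_2(\Phi)$) we get $\Phi^2\models\chi_2(\theta)$, i.e. every model of $\Phi^2$ carries witnesses for the quantified variables; interpreting the renamed constants $\chi_2^C(C')\subseteq C^2$ as such witnesses expands any model of $\Phi^2$ to a model of $\Phi^2\cup\{\chi_2^C(\psi)\}$. This proves the claim.

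The routine parts---the reformulation via Proposition~\ref{prop:amalg}, Lemma~\ref{lemma:bicond} and the two satisfaction conditions, together with the final amalgamation---are immediate from the cited results, and notably do not need the flexible-symbol hypotheses on $\chi_2$ (those are reserved for Lemma~\ref{lemma:lifting} in the main theorem). The delicate step, and the one I would write out with care, is the existential-closure argument of the previous paragraph: one must check the hybrid bookkeeping, namely that wrapping by $+$ makes $\theta$ world-independent (cf.\ Lemma~\ref{lemma:+}), that $\theta$ really lies in $\Sen(\Delta)$ so that completeness of $\Phi$ applies, and that $\chi_2$ commutes with quantification and with the substitution of $X$ by $C'$, so that a witness for $\chi_2(\theta)$ can indeed be assigned to the constants $\chi_2^C(C')$.
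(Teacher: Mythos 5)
Your overall strategy mirrors the paper's proof of Proposition~\ref{prop:cons}: reduce, via Proposition~\ref{prop:amalg}, Proposition~\ref{prop:sat-cond} and Lemma~\ref{lemma:bicond}, to producing a model of $\Phi^1$ over $\Delta^1(C^1)$ and a model of $\Phi^2$ over $\Delta^2(C^2)$ whose $\Delta(C)$-reducts are elementarily equivalent; then obtain these by existentially quantifying away the finitely many relevant constants, transferring the resulting $\Delta$-sentence across the span using maximal consistency of $\Phi$, and reintroducing the constants as witnesses, with compactness handling the passage to the infinite set. Your bookkeeping through the complete theory $T^a$ instead of the paper's $\pm$-decorated finite sets $\Psi$ is a cosmetic difference: $T^a$ contains $-\varphi$ for every $\varphi$ false in $(V^a,N^a)$, so $(V^b,N^b)\models T^a$ does yield elementary equivalence.

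There is, however, a genuine error, and it sits exactly at the step you flag as delicate: you form the existential closure as $\theta\coloneqq +\Exists{X}\psi'$, with the global modality \emph{outside} the quantifier, whereas the argument needs $\Exists{X}(+\psi')$, with it \emph{inside}. These are not equivalent: since quantifiers are evaluated locally, $+\Exists{X}\psi'$ says that \emph{at each world} there exist witnesses making $\psi'$ true \emph{at that world}, while $\Exists{X}(+\psi')$ says there is a \emph{single} choice of witnesses making $\psi'$ true at \emph{every} world. Your final inference --- ``$\Phi^2\models\chi_2(\theta)$, hence any model of $\Phi^2$ expands, by interpreting the constants $\chi_2^C(C')$ as the witnesses, to a model of $\chi_2^C(\psi)$'' --- needs the second, stronger sentence: the constants in $C^2$ are nominal/rigid constants whose interpretation is uniform across worlds, and the expansion must satisfy $\chi_2^C(\psi)$ \emph{globally}. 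With your $\theta$ the witnesses may vary from world to world, and the step is invalid as a logical inference. For instance, take $\psi$ to be a nominal constant $c\in C'$, so $\psi'=x$ and $\theta=+\Exists{x}x$; then $\chi_2(\theta)$ holds in \emph{every} Kripke structure (at each world interpret $x$ as that world), yet no Kripke structure with two or more worlds has an expansion satisfying the sentence $c$ globally. This world-dependence of witnesses is precisely what the paper's device avoids: in $\Exists{D}\bigwedge\pm\Psi$ the matrix $\bigwedge\pm\Psi$ is world-independent by Lemma~\ref{lemma:+}, so witnesses found at one world work at all worlds. The repair is local --- replace $+\Exists{X}\psi'$ by $\Exists{X}(+\psi')$ throughout --- after which your argument (maximal consistency gives $\Phi\models\Exists{X}(+\psi')$, translation along $\chi_2$, reinterpretation of witnesses as constants, relativized union) goes through.
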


\begin{figure}[h]
\begin{tikzcd}
\pm\Psi \ar[r,dotted,no head]& \Delta(D) \ar[rr,"\chi_1^D"] \ar[rd,"\chi_2^D"] & & \Delta^1(D^1) \ar[rd,dotted,"\pi_1"] & \\ 
& & \Delta^2(D^2)  \ar[rr,dotted,"\pi_2",near start] & & (\Delta^\diamond(D^\diamond),\Phi_D^\diamond) \\
& & & & \\
\Exists{D}\bigwedge\pm\Psi \ar[r,dotted,no head] & \Delta \ar[uuu,hook] \ar[rr,"\chi_1" near start,dashed] \ar[rd,swap, "\chi_2"] & & \Delta^1 \ar[uuu,dashed,hook] \ar[dr,dotted,"\pi_1"]&\\ 
&  & \Delta^2 \ar[uuu,hook] \ar[rr,dotted,"\pi_2"] & & (\Delta^\diamond,\Phi^\diamond) \ar[uuu,hook] \\
\end{tikzcd}
\caption{}
\end{figure}

\begin{proof}
Let $\Psi$ be a finite set of $\Delta(C)$-sentences.
Let $D\subseteq C$ be the finite subset of all constants from $C$ that occur in $\Psi$.
We define $D^1\coloneqq \chi_1^C(D)$ and $D^2\coloneqq \chi_2^C(D)$.
For each $i\in\{1,2\}$, we let $\chi_i^D$ denote the restriction of $\chi_i^C$ to $\Delta(D)$.
We show that
$\Phi_D^\diamond \cup\Phi^{\pi_1} \cup \Phi^{\pi_2} \cup 
\{ + (+\psi)^{\pi_1} \Leftrightarrow  +(+\psi)^{\pi_2} \mid \psi\in\Psi\}$ is consistent, 
where $(\Delta^\diamond(D^\diamond),\Phi_D^\diamond)$ is the relativized union of $\Delta^1(D^1)$ and $\Delta^2(D^2)$.

\begin{itemize}
\item[] Since $\Phi^1$ is consistent,
$(W^1,M^1)\models \Phi^1$ for some Kripke structure $(W^1,M^1)$ over $\Delta^1$.
Let $(V^1,N^1)$ be an arbitrary expansion of $(W^1,M^1)$ to $\Delta^1(D^1)$.
Let $\pm \Psi\coloneqq \{ \pm \psi \mid \psi \in \Psi \text{ and } (V^a,N^a) \models \pm \psi \}$,
where $(V^a,N^a)=(V^1,N^1)\red_{\chi_1^D}$.
Since $(V^a,N^a)\models \pm\Psi$,
$(W^a,M^a)\models \Exists{D}\bigwedge\pm\Psi$.\footnote{Since $D$ is not a set of variables, 
$\Exists{D}\bigwedge\pm\Psi$ is not a sentence in our language,
but there exists a $\Delta$-sentence semantically equivalent to it.}
Since $\Phi^2$ is consistent,
$(W^2,M^2)\models \Phi^2$ for some Kripke structure $(W^2,M^2)$ over $\Delta^2$.
Since $\chi_1(\Phi)\subseteq \Phi^1$ and $\chi_2(\Phi)\subseteq \Phi^2$,
by satisfaction condition,
$(W^a,M^a)\models \Phi$ and $(W^b,M^b)\models \Phi$.
Since $\Phi$ is maximally consistent,
$(W^a,M^a)\equiv (W^b,M^b)$.
Since $(W^a,M^a)\models \Exists{D}\bigwedge\pm\Psi$,
$(W^b,M^b)\models \Exists{D}\bigwedge \pm \Psi$.
It follows that $(V^b,N^b)\models \pm\Psi$ for some expansion $(V^b,N^b)$ of $(W^b,M^b)$ to $\Delta(D)$.
Since $\{\Delta(D)\hookleftarrow \Delta\stackrel{\chi_2}\to\Delta^2,\Delta(D)\stackrel{\chi_2^D}\to \Delta^2(D^2)\hookleftarrow \Delta^2\}$ is a pushout and ${(V^b,N^b)\red_\Delta}=(W^b,M^b)={(W^2,M^2)\red_{\chi_2}}$,
there exists an expansion $(V^2,N^2)$ of $(W^2,M^2)$ to $\Delta^2(D^2)$ such that $(V^2,N^2)\red_{\chi_2^D}=(V^b,N^b)$.
Let $(V,N)$ be a relativized union of $(V^1,N^1)$ and $(V^2,N^2)$.
Since $(V^1,N^1)\models \Phi^1$ and $(V^2,N^2)\models \Phi^2$,
by Proposition~\ref{prop:sat-cond},
$(V,N)\models \Phi_D^\diamond\cup \Phi^{\pi_1}\cup\Phi^{\pi_2}$.
By Lemma~\ref{lemma:bicond},
$(V,N)\models \{+(+\psi)^{\pi_1} \Leftrightarrow +(+\psi)^{\pi_2}  \mid \psi\in\Psi\}$.
\end{itemize}
Hence, by compactness, $T_C$ is consistent.
\end{proof}

Recall that $\nom$ denotes the sort of nominals.

\begin{notation} [Nominal type]
We define a type in one nominal variable $z$:
\begin{center}
$\Gamma_\nom\coloneqq\{\at{z}\pi_i \Rightarrow z\neq c^i\mid i=\overline{1,2} \text{ and } c:\to \nom\in C\}$
\end{center}
where 
$c^i$ denotes $\chi_i(c)$ for all nominals $c:\to \nom\in C$, and
$z\neq c^i$ denotes $\neg\at{z}c^i$.
\end{notation}

Notice that a Kripke structure $(V,N)$ which satisfies $\pi_1\vee\pi_2$ and omits $\Gamma_\nom$ has the set of possible worlds reachable by the nominals in $C^\diamond$.

\begin{proposition} \label{prop:omit-n}
$T_C$ $\alpha$-omits $\Gamma_\nom$, that is, 
for each set of sentences $p\subseteq \Sen(\Delta^\diamond(C^\diamond,z))$ of cardinality strictly less than $\alpha$ such that $T_C\cup p$ is consistent, we have $T_C\cup p\not\models\Gamma_\nom$. 
\end{proposition}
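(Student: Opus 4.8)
The plan is to unfold the definition of $\alpha$-omission and reduce it to a single consistency statement, which I then settle by compactness of $\HFOL$ together with Lemma~\ref{lemma:bicond}. Fix $p\subseteq\Sen(\Delta^\diamond(C^\diamond,z))$ with $\card(p)<\alpha$ and $T_C\cup p$ consistent. Since $\Gamma_\nom$ is a set of sentences in the nominal constant $z$, to prove $T_C\cup p\not\models\Gamma_\nom$ it suffices to exhibit one $\gamma\in\Gamma_\nom$ with $T_C\cup p\not\models\gamma$. For $\gamma=(\at{z}\pi_i\Rightarrow\neg\at{z}c^i)$ the truth value is world-independent, so failure of $\gamma$ is just $\at{z}\pi_i\wedge\at{z}c^i$; and because $\at{c^i}\pi_i\in\Phi^\diamond_C\subseteq T_C$ forces $V_{c^i}\in V_{\pi_i}$, the conjunct $\at{z}\pi_i$ is already entailed by $\at{z}c^i$. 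Hence everything reduces to finding $i\in\{1,2\}$ and a nominal $c\colon\to\nom\in C$ for which $T_C\cup p\cup\{\at{z}c^i\}$ is consistent.

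First I would pin down the combinatorics. As $\alpha$ is infinite and $\card(p)<\alpha$, the sentences of $p$ mention fewer than $\alpha$ constants of $C^\diamond$; since $C$ supplies $\alpha$ many nominals $c\colon\to\nom$ and the assignment $c\mapsto(c^1,c^2)$ is injective, I can choose $c\colon\to\nom\in C$ such that neither $c^1$ nor $c^2$ occurs in $p$. Next I take any model $(V,N)\models T_C\cup p$, put $w\coloneqq V_z$, and use $\pi_1\vee\pi_2\in\Phi^\diamond_C$ to get $w\in V_{\pi_i}$ for some $i$; without loss of generality $i=1$ (the case $i=2$ is symmetric), so I aim at the consistency of $T_C\cup p\cup\{\at{z}c^1\}$.

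This I would obtain by compactness, producing a model of each finite subset $\Delta_0$. Such a model is built from $(V,N)$ by changing only the denotations of the two constants $c^1$ and $c^2$, setting $c^1$ to $w$ and $c^2$ to a world $w'\in V_{\pi_2}$ still to be chosen. Because $c^1,c^2$ do not occur in $p$ and are absent from $\Phi^{\pi_1}\cup\Phi^{\pi_2}$, the only clauses of $\Delta_0$ whose truth can change are $\at{z}c^1$ (now true, since $V_z=w=V_{c^1}$), the membership sentences $\at{c^1}\pi_1,\at{c^2}\pi_2$ (true, since $w\in V_{\pi_1}$ and $w'\in V_{\pi_2}$), and the finitely many bicondition sentences $+(+\psi_k)^{\pi_1}\Leftrightarrow +(+\psi_k)^{\pi_2}$ with $\psi_1,\dots,\psi_n$ occurring in $\Delta_0$. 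By Lemma~\ref{lemma:bicond}, these hold in the modified structure exactly when the reducts $(V^a,N^a)$ and $(V^b,N^b)$, now carrying $c\mapsto w$ and $c\mapsto w'$ respectively, agree on each $\psi_k$; so it remains to choose $w'$ realizing this finite agreement.

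The crux is this finite matching, and I would resolve it through the elementary equivalence $(V^a,N^a)\equiv(V^b,N^b)$ that the bicondition clauses of $T_C$ force on $(V,N)$. Replacing the constant $c$ throughout each $\psi_k$ by a nominal variable $x$ yields $\Delta(C\setminus\{c\})$-sentences $\theta_k$, and the conjunction stating ``there is a world $x$ making $\psi_k$ globally true, under $c$ read as $x$, exactly for the indices in $S$'' is expressible as a single $\Delta(C\setminus\{c\})$-sentence $\nu\coloneqq\Exists{x}\bigl(\bigwedge_{k\in S}{+\theta_k}\wedge\bigwedge_{k\notin S}\neg{+\theta_k}\bigr)$, where $S$ is the pattern that $w$ realizes in $(V^a,N^a)$. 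Then $w$ witnesses $\nu$ in $(V^a,N^a)$, so $(V^a,N^a)\models\nu$, and since $(V^a,N^a)\equiv(V^b,N^b)$ we get $(V^b,N^b)\models\nu$, furnishing the required $w'\in V_{\pi_2}$ that agrees with $w$ on all $\psi_k$. This gives a model of $\Delta_0$; compactness then yields a model of $T_C\cup p\cup\{\at{z}c^1\}$, and the reduction of the first paragraph lets us conclude that $T_C$ $\alpha$-omits $\Gamma_\nom$. The point to execute with care is exactly this expressibility step: ensuring that ``$\psi_k$ is globally satisfied once $c$ is interpreted as $x$'' and the ensuing world-quantifier are written with the available operators $\at{x}{}$, $\store{x}{}$ and nominal quantification, so that $\nu$ is a genuine $\Delta(C\setminus\{c\})$-sentence transferable along $\equiv$.
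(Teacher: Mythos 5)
Your proof is correct, and it shares the paper's essential ingredients: a fresh nominal $c$ whose copies $c^1,c^2$ do not occur in $p$, the elementary equivalence $(V^a,N^a)\equiv(V^b,N^b)$ extracted from the biconditional sentences of $T_C$ via Lemma~\ref{lemma:bicond}, the transfer of a finite $\pm$-pattern through an existentially quantified nominal variable (your $\nu$ is precisely the paper's $\Exists{c}\bigwedge\pm\Psi$, made rigorous by the same constants-to-variables renaming that the paper relegates to a footnote), and a concluding appeal to compactness. Where you genuinely diverge is in the architecture of the compactness argument. The paper never works with a model of the full $T_C\cup p$: it first cuts down to the subtheory $T_D$ determined by the constants occurring in $p$, runs the pattern-transfer on a model of $T_D\cup p$ over the smaller signature $\Delta^\diamond(D^\diamond)$ (expanding it by fresh interpretations of $c^1,c^2$), and then needs an iterative step re-adjoining the remaining nominals of $C$ one at a time ($T_E$ to $T_{E\cup\{k\}}$) before a final compactness application. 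You instead exploit the hypothesis that $T_C\cup p$ is consistent to obtain a model of the whole theory at once, and perform surgery only on the denotations of $c^1$ and $c^2$; since every sentence of $T_C\cup p\cup\{\at{z}c^1\}$ not mentioning these two constants keeps its truth value, a single compactness application over finite subsets $\Delta_0$ suffices and the iteration disappears. This yields a shorter and arguably cleaner proof; the only cost is that the witness $w'$ must be re-chosen for each finite $\Delta_0$ (it depends on the finitely many biconditionals of $\Delta_0$ mentioning $c$), which you handle correctly. The step you flag as delicate, the expressibility of $\nu$, is indeed fine: the grammar admits existential quantification over nominal variables, the body of $\nu$ is a Boolean combination of $+$-sentences and hence world-independent by Lemma~\ref{lemma:+}, so $\nu$ is a genuine $\Delta(C\setminus\{c\})$-sentence transferable along $\equiv$, exactly as in the paper.
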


\begin{proof}
Let $p\subseteq\Sen(\Delta^\diamond(C^\diamond,z))$ be a set of sentences of cardinality strictly less than $\alpha$ such that $T_C\cup p$ is consistent.
Let $D_\nom$ be the set of all nominals $c\in C_\nom$ such that either $c^1$ or $c^2$ occurs in $p$.
Since $\card(p)<\alpha$, we have $\card(D_\nom)<\alpha$. 
Let $D$ be the set of constants obtained from $C$ by removing all nominals from $C_\nom\setminus D_\nom$.
We define $D^i\coloneqq \chi_i^C(D)$ for each $i\in\{1,2\}$. 
It follows that $p\subseteq \Sen(\Delta^\diamond(D^\diamond,z))$, 
where $(\Delta^\diamond(D^\diamond),\Phi_D^\diamond)$ is the relativized union of $\Delta^1(D^1)$ and $\Delta^2(D^2)$.
Let $T_D$ be the set of all sentences from $T_C$ which contains only constants from $D^1$ and $D^2$.
Since $T_C\cup p$ is consistent, its subset $T_D\cup p$ is consistent too.
Let $(V,N)$ be a Kripke structure over $\Delta^\diamond(D^\diamond)$ such that $(V,N)\models T_D$ and let $v\in |V|$ such that $(V^{z\leftarrow v},N)\models p$.
Since $(V,N)\models \pi_1\vee \pi_2$, we have $v\in \pi_1^V$ or $v\in \pi_2^V$.
We assume that $v\in \pi_1^V$, as the case $v\in \pi_2^V$ is symmetrical.
According to our conventions,
$(V^1,N^1)= (V,N)\red_{\pi_1}$,
$(V^a,N^a)= (V,N)\red_{\chi_1^D}$,
$(V^2,N^2)= (V,N)\red_{\pi_2}$ and
$(V^b,N^b)= (V,N)\red_{\chi_2^D}$,
where $\chi_i^D$ denotes the restriction of $\chi_i^C$ to $\Delta(D)$ for each $i\in\{1,2\}$.
\begin{figure}[h]\small
\begin{tikzcd}
\pm\Psi \ar[r,dotted,no head]& \Delta(D,c) \ar[rr] \ar[rd] & & \Delta^1(D^1,c^1) \ar[rd,dotted,"\pi_1"] & \\ 
& & \Delta^2(D^2,c^2)  \ar[rr,dotted,"\pi_2",near start] & & \Delta^\diamond(D^\diamond,c^1,c^2) \\
& & & & \\
\Exists{c}\bigwedge\pm\Psi \ar[r,dotted,no head] & \Delta(D) \ar[uuu,hook] \ar[rr,"\chi_1^D" near start,dashed] \ar[rd,swap, "\chi_2^D"] & & \Delta^1(D^1) \ar[uuu,dashed,hook] \ar[dr,dotted,"\pi_1"]&\\ 
&  & \Delta^2(D^2) \ar[uuu,hook] \ar[rr,dotted,"\pi_2"] & & \Delta^\diamond(D^\diamond) \ar[uuu,hook] \\
\end{tikzcd}
\caption{}
\end{figure}

Since $\card(D_\nom)<\alpha=\card(C_\nom)$, there exists a nominal $c\in C_\nom\setminus D_\nom$.
Let $\Psi\subseteq \Sen(\Delta(D,c))\setminus\Sen(\Delta(D))$ be a finite set of sentences.
Then
$T_D \cup  p \cup \{(\at{c^1}\pi_1),(\at{c^2}\pi_2), (z = c^1) \} \cup \{ +(+\psi)^{\pi_1} \Leftrightarrow + (+\psi)^{\pi_2} \mid \psi\in\Psi\}$ is consistent:
\begin{itemize}
\item[] We define $\pm \Psi=\{ \pm\psi\mid ((V^a)^{c\leftarrow v},N^a) \models \pm \psi\}$.
Since $((V^a)^{c\leftarrow v},N^a) \models \pm\Psi$, 
$(V^a,N^a)\models \Exists{c}\bigwedge \pm\Psi$.
Since $(V,N)\models \{ +(+\varphi)^{\pi_1} \Leftrightarrow + (+\varphi)^{\pi_2} \mid \varphi\in\Sen(\Delta(D))\}$,
by Lemma~\ref{lemma:bicond},
$(V^a,N^a) \equiv (V^b,N^b)$.
It follows that $(V^b,N^b)\models \Exists{c}\bigwedge \pm\Psi$.
By semantics,
$((V^b)^{c\leftarrow u},N^b) \models\pm\Psi$ for some $u\in|V^b|$.
We get $((V^a)^{c\leftarrow v},N^a)\models \psi$ iff $((V^b)^{c\leftarrow u},N^b)\models \psi$
for all $\psi\in\Psi$.
By Lemma~\ref{lemma:bicond},
$((V)^{(c^1,c^2)\leftarrow (v,u)},N)\models \{ +(+\psi)^{\pi_1} \Leftrightarrow + (+\psi)^{\pi_2} \mid \psi\in\Psi\} $.
By satisfaction condition,
$((V)^{(z,c^1,c^2)\leftarrow (v,v,u)},N)\models \{ +(+\psi)^{\pi_1} \Leftrightarrow + (+\psi)^{\pi_2} \mid \psi\in\Psi\} $.
By satisfaction condition, since $(V,N)\models T_D$,
$((V)^{(z,c^1,c^2)\leftarrow (v,v,u)},N)\models T_D$. 
Since $(V^{z\leftarrow v},N)\models p$,
by satisfaction condition,
$((V)^{(z,c^1,c^2)\leftarrow (v,v,u)},N)\models p$. 
Since $v\in \pi_1^V$, $u\in \pi_2^V$ and the interpretations of $z$ and $c^1$ are $v$,
we obtain
$((V)^{(z,c^1,c^2)\leftarrow (v,v,u)},N)\models \{(\at{c^1}\pi_1),(\at{c^2}\pi_2),(z= c^1)\}$.
\end{itemize}
By compactness, $T_{D\cup\{c\}} \cup p \cup \{\at{z} \pi_1 \wedge z = c^1 \}$ is consistent, 
where $T_{D\cup\{c\}}$ is the set of all sentences from $T_C$ which contains only constants from $D^1\cup D^2 \cup \{c^1:\to \nom, c^2:\to\nom\}$.

Now let $E\subset C$ be any proper subset which includes $D$.
We define $E^i\coloneqq \chi_i^C(E)$ for each $i\in\{1,2\}$. 
Assuming that $T_E\cup p\cup \{\at{z}\pi_1\wedge z=c^1\}$ is consistent,
we prove that 
$T_{E\cup\{k\}}\cup p \cup \{\at{z}\pi_1\wedge z = c^1 \}$ is consistent for any $k\in C_\nom\setminus E_\nom$.
The proof is similar to the one above.

By compactness, 
$T_C\cup p \cup \{\at{z}\pi_1\wedge  z = c^1 \}$ is consistent.
Since $p\subseteq\Sen(\Delta^\diamond(C^\diamond,z))$ is an arbitrary set of cardinality strictly less than $\alpha$ consistent with $T_C$,
it follows that $T_C$ $\alpha$-omits $\Gamma_\nom$.
\end{proof}

\begin{notation}[Rigid types]
Let $z^1$ be a variable of sort $s^1\in S^\rigid_1$ and $z^2$ be a variable of sort $s^2\in S^\rigid_2$.
For each $i\in\{1,2\}$, we define a type in variable $z^i$:
\begin{center}
$\Gamma_{s^i}\coloneqq \{ z^i\neq c^i \mid c^i:\to s^i\in  C^i\}$.
\end{center}
\end{notation}

Notice that a Kripke structure $(V,N)$ over $\Delta^\diamond(C^\diamond)$ which omits $\Gamma_{s^i}$ has the carrier sets corresponding to the sort $s^i$ reachable by the constants of sort $s^i$ in $C^i$,
where $i\in\{1,2\}$.

\begin{proposition} \label{prop:omit-r}
$T_C$ $\alpha$-omits both types $\Gamma_{s^1}$ and $\Gamma_{s^2}$.
\end{proposition}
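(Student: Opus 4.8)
The plan is to show that $T_C$ $\alpha$-omits each of $\Gamma_{s^1}$ and $\Gamma_{s^2}$ by a single template, adapting the proof of Proposition~\ref{prop:omit-n} by replacing the world named by a fresh nominal with an element of a rigid sort named by a fresh \emph{rigid} constant. Fix $i\in\{1,2\}$ and let $p\subseteq\Sen(\Delta^\diamond(C^\diamond,z^i))$ satisfy $\card(p)<\alpha$ with $T_C\cup p$ consistent. Every rigid sort of $\Delta^i$ carries at least $\alpha$ constants of $C^i$ -- the renamings $\chi_i^C(c)$ when $s^i\in\chi_i(S^\rigid)$, and the fresh family $C^i_{s^i}$ when $s^i\notin\chi_i(S^\rigid)$ -- so since $\card(p)<\alpha$ I can first pick a constant $c^i:\to s^i\in C^i$ not occurring in $p$. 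It then suffices to prove $T_C\cup p\cup\{z^i=c^i\}$ consistent, for any model of it falsifies $z^i\neq c^i\in\Gamma_{s^i}$ and hence witnesses $T_C\cup p\not\models\Gamma_{s^i}$.

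I would then split on whether $s^i$ lies in the image of $\chi_i$. If $s^i\in S^\rigid_i\setminus\chi_i(S^\rigid)$, the chosen $c^i$ is one of the genuinely new constants of $C^i_{s^i}$, and its sort appears in no $\chi_i^C$-translation of a $\Delta(C)$-sentence; hence $c^i$ is constrained neither by the biconditionals of $T_C$ nor by $\Phi^{\pi_1}$, $\Phi^{\pi_2}$, or $\Phi_C^\diamond$. In this case the argument is immediate: take any model of $T_C\cup p$, let $e$ be the (rigid, hence world-independent) value of $z^i$, and reinterpret $c^i$ as $e$; since $c^i$ occurs in neither $T_C$ nor $p$, this preserves $T_C\cup p$ while forcing $z^i=c^i$.

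The substantive case is $s^i\in\chi_i(S^\rigid)$, where $c^i=\chi_i^C(c)$ for some $c:\to s\in C$ with $\chi_i(s)=s^i$, so that $c^i$ is coupled to the opposite side through the biconditionals of $T_C$. Here I would run the inductive scheme of Proposition~\ref{prop:omit-n}: delete from $C$ the sort-$s$ constants not occurring in $p$ to obtain $D$ with $c\notin D$, take a model $(V,N)\models T_D$ and an element $e$ of its $\chi_i$-reduct such that interpreting $z^i$ as $e$ satisfies $p$, and, for each finite set $\Psi$ of $\Delta(D,c)$-sentences, expand by $c$ so that $c$ denotes $e$ (forcing $z^i=c^i$). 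The heart of the argument is to name a matching element on the opposite reduct: Lemma~\ref{lemma:bicond} converts the biconditionals of $T_C$ into elementary equivalence of the two $\Delta(D)$-reducts, and the $\Exists{c}\bigwedge\pm\Psi$ transfer already used in Propositions~\ref{prop:cons} and~\ref{prop:omit-n} supplies a partner $u$ whose naming preserves that equivalence. Proposition~\ref{prop:amalg} recombines the two expansions into a model of the finite fragment of $T_{D\cup\{c\}}$ singled out by $\Psi$; compactness over all $\Psi$ gives the consistency of $T_{D\cup\{c\}}\cup p\cup\{z^i=c^i\}$, and adding the deleted constants back one at a time, with a final appeal to compactness, yields the consistency of $T_C\cup p\cup\{z^i=c^i\}$.

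The main obstacle will be exactly this transfer in the image case: elementary equivalence of the two reducts only guarantees that they satisfy the same $\Delta(C)$-sentences, not that the specific element $e$ named on one side has a literal counterpart on the other, so the partner $u$ must be extracted from the existential sentence $\Exists{c}\bigwedge\pm\Psi$ rather than by direct identification. A secondary point to record is that $\chi_1$, unlike $\chi_2$, need not be injective on sorts, so for $\Gamma_{s^1}$ several rigid sorts of $\Delta$ may map to one $s^1$; this is harmless, merely enlarging the supply of renamed constants of sort $s^1$, and choosing any single fresh $c^1$ still suffices.
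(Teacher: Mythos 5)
Your plan is, in all essential respects, the paper's own proof: the reduction of $\alpha$-omission to the consistency of $T_C\cup p\cup\{z^i=c^i\}$ for a constant $c^i$ not occurring in $p$, the pruning of $C$ down to a subset $D$ with $c\notin D$, the finite-$\Psi$ argument using Lemma~\ref{lemma:bicond} and the $\Exists{c}\bigwedge\pm\Psi$ transfer to name a partner $u$ on the opposite reduct, and the two-stage compactness argument (first obtaining consistency of $T_{D\cup\{c\}}\cup p\cup\{z^i=c^i\}$, then adding the remaining constants back one at a time) are exactly the paper's steps; the paper likewise treats $\Gamma_{s^2}$ by symmetry and dismisses the case $s^i\notin\chi_i(S^\rigid)$ as easy, for which you supply the correct explicit reinterpretation argument. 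The minor difference in how $D$ is carved out (you prune only the constants of the single sort $s$, the paper prunes all sorts in $\chi_1^{-1}(s^1)$) is immaterial.

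There is, however, one step that fails as written: your appeal to Proposition~\ref{prop:amalg} to assemble the model of the finite fragment. That model must satisfy $T_D\cup p\cup\{z^i=c^i\}$ together with the biconditionals for $\Psi$, and $p$ is an arbitrary set of sentences over the \emph{whole} signature $\Delta^\diamond(D^\diamond,z^i)$. A relativized union of the two expanded reducts $((V^1)^{c^1\leftarrow e},N^1)$ and $((V^2)^{c^2\leftarrow u},N^2)$ is not unique (the paper notes this right after Proposition~\ref{prop:amalg}): everything not determined by the two $\pi$-reducts, such as interpretations of symbols of one signature at worlds lying only in the other part, or modality edges crossing the two parts, is chosen arbitrarily, and sentences of $p$ may speak about precisely these data. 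So the recombined structure satisfies $T_D$, the biconditionals and $z^i=c^i$, but nothing guarantees it satisfies $p$. The repair is what the paper does in the proof of Proposition~\ref{prop:omit-n} that you are imitating: do not rebuild a model from its reducts, but expand the already given model $(V,N)\models T_D\cup p$ itself, interpreting $c^1$ as $e$, $c^2$ as $u$, and $z^i$ as $e$; then Lemma~\ref{lemma:bicond} yields the biconditionals for $\Psi$, and the satisfaction condition (the fresh constants occur in neither $T_D$ nor $p$) preserves $T_D\cup p$ while forcing $z^i=c^i$. Proposition~\ref{prop:amalg} is needed only in Proposition~\ref{prop:cons}, where the two sides come from genuinely separate models and there is no type $p$ to preserve.
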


\begin{proof}
We show that $T_C$ omits $\Gamma_{s^1}$, since showing that $T_C$ omits $\Gamma_{s^2}$ is similar.
Moreover, we focus on the case when $s^1\in\chi_1(S^\rigid)$, since the case $s^1\not\in\chi_1(S^\rigid)$ is easy.

Let $p\subseteq\Sen(\Delta^\diamond(C^\diamond,z^1))$ be a set of sentences such that $\card(p)<\alpha$ and $T_C\cup p$ is consistent.
We define the subset of constants $D\subseteq C$ as follows:
(a)~for all rigid sorts $s\in \chi_1^{-1}(s^1)$, the set $D_s$ consists of all constants $c:\to s\in C$ such that either $c^1$ or $c^2$ occurs in $p$, and
(b)~for all rigid sorts $s\not \in \chi_1^{-1}(s^1)$, we have $D_s\coloneqq C_s$.
For each $i\in\{1,2\}$, we define $D^i\coloneqq \chi_i(D)\cup \{c: \to s\in C^i \mid s\in S_i^\rigid\setminus \chi_i(S^\rigid) \}$.
It follows that $p\subseteq \Sen(\Delta^\diamond(D^\diamond,z^1))$, 
where $(\Delta^\diamond(D^\diamond,z^1),\Phi_D^\diamond)$ is the relativized union of $\Delta^1(D^1)$ and $\Delta^2(D^2)$.

Let $T_D$ be the set of all sentences from $T_C$ which contains only constants from $D^1$ and $D^2$.
Since $T_C\cup p$ is consistent, its subset $T_D\cup p$ is consistent too.
Let $(V,N)$ be a Kripke structure over $\Delta^\diamond(D^\diamond)$ such that $(V,N)\models T_D$ and $(V,N^{z^1\leftarrow e})\models p$ for some possible world $v\in V_{\pi_1}$ and element $e\in N_{v,s^1}$.
According to our conventions,
$(V^1,N^1)= (V,N)\red_{\pi_1}$,
$(V^a,N^a)= (V,N)\red_{\chi_1^D}$,
$(V^2,N^2)= (V,N)\red_{\pi_2}$ and
$(V^b,N^b)= (V,N)\red_{\chi_2^D}$,
where $\chi_i^D$ denotes the restriction of $\chi_i^C$ to $\Delta(D)$ for each $i\in\{1,2\}$.
\begin{enumerate}[1)]
\item \label{prop:omit-r1} 
Let $s\in \chi_1^{-1}(s^1)$. 
Since $\card(D_s)<\alpha=\card(C_s)$, there exists $c \in C_s\setminus D_s$.
Let $\Psi\subseteq \Sen(\Delta(D,c))\setminus\Sen(\Delta(D))$ be a finite set of sentences.
We show that 
$T_D \cup  p \cup \{ z^1 = c^1 \} \cup \{ +(+\psi)^{\pi_1} \Leftrightarrow + (+\psi)^{\pi_2} \mid \psi\in\Psi\}$ is consistent, where $c^1:\to \chi_1(s)$ is the translation of $c:\to s$ along $\chi_1^C$.
The proof is similar to the first part of the proof of Proposition~\ref{prop:omit-n}.
By compactness, $T_{D\cup\{c\}} \cup p \cup \{ z^1 = c^1 \}$ is consistent, 
where $T_{D\cup\{c\}}$ is the set of all sentences from $T_C$ which contains only constants from $D^1\cup\{c^1:\to \chi_1(s) \}$ and $D^2\cup\{c^2:\to \chi_2(s)\}$.
\item \label{prop:omit-r2} 
Now let $E\subseteq C$ be an arbitrary subset of constants which includes $D$.
We define $E^i\coloneqq \chi_i^C(E)\cup \{c: \to s\in C^i \mid s\in S_i^\rigid\setminus \chi_i(S^\rigid) \}$ for each $i\in\{1,2\}$. 
Assuming that $T_E\cup p\cup \{ z^1=c^1\}$ is consistent,
we prove that 
$T_{E\cup\{d\}}\cup p \cup \{ z^1 = c^1 \}$ is consistent for any $d:\to s\in C\setminus E$.
The proof is similar to the one above.
\end{enumerate}
From (\ref{prop:omit-r1}) and (\ref{prop:omit-r2}), by compactness, 
$T_C\cup p \cup \{ z^1 = c^1 \}$ is consistent.
Since $p\subseteq\Sen(\Delta^\diamond(C^\diamond,z^1))$ is an arbitrary set of cardinality strictly less than $\alpha$ an consistent with $T_C$,
it follows that $T_C$ $\alpha$-omits $\Gamma_{s^1}$.
\end{proof}

All the preliminary results for proving Robinson consistency property are in place.
\begin{theorem}[Robinson consistency] \label{th:robinson}
Recall that $\chi_2$ is injective on sorts and nominals.
In addition, assume that $\chi_2$ protects flexible symbols.
Let $\Delta^1\stackrel{\upsilon_1}\to \Delta'\stackrel{\upsilon_2}\leftarrow\Delta^2$ be the pushout of $\Delta^1\stackrel{\chi_1}\leftarrow \Delta\stackrel{\chi_2}\to\Delta^2$.
Then $\upsilon_1(\Phi^1)\cup \upsilon^2(\Phi^2)$ is consistent.
\end{theorem}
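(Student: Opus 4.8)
The plan is to follow Lindström's strategy: produce a single witness Kripke structure from $T_C$ by means of the Omitting Types Theorem of \cite{gai-hott}, split it into a $\Delta^1$-part and a $\Delta^2$-part, and amalgamate them over the pushout $\Delta'$. First I would apply the Omitting Types Theorem to the presentation $(\Delta^\diamond(C^\diamond),T_C)$. By Proposition~\ref{prop:cons} the theory $T_C$ is consistent, and by Propositions~\ref{prop:omit-n} and~\ref{prop:omit-r} it $\alpha$-omits the nominal type $\Gamma_\nom$ together with every rigid type $\Gamma_{s^1}$ and $\Gamma_{s^2}$. Since this family of types has cardinality at most $\alpha$, the Omitting Types Theorem supplies a model $(V,N)\models T_C$ that omits all of them simultaneously. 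Because $(V,N)\models\pi_1\vee\pi_2$ (a consequence of $\Phi_C^\diamond$) and omits $\Gamma_\nom$, each possible world is the denotation of a nominal from $C^\diamond$; because it omits each $\Gamma_{s^i}$, each element of a rigid sort is the denotation of a rigid hybrid term over $C^\diamond$. Thus $(V,N)$ is reachable, and in particular its relativized reduct $(V^2,N^2)=(V,N)\red_{\pi_2}$ is reachable by $C^2$ and its base reduct $(V^a,N^a)=(V,N)\red_{\chi_1^C}$ is reachable by $C$.

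Next I would read off the two halves. By the satisfaction condition for the relativized translation (Proposition~\ref{prop:sat-cond}), from $\Phi^{\pi_1},\Phi^{\pi_2}\subseteq T_C$ I obtain $(V^1,N^1)\models\Phi^1$ and $(V^2,N^2)\models\Phi^2$, while the biconditionals $+(+\psi)^{\pi_1}\Leftrightarrow+(+\psi)^{\pi_2}$ contained in $T_C$, together with Lemma~\ref{lemma:bicond}, give $(V^a,N^a)\equiv(V^b,N^b)$. At this point the obstacle becomes visible: the two base reducts are only elementarily equivalent, not literally equal, so $(V^1,N^1)$ and $(V^2,N^2)$ cannot be glued together directly. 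This is precisely where the hypothesis on $\chi_2$ is spent. I would apply the Lifting Lemma~\ref{lemma:lifting} to $\chi_2$ (injective on sorts and nominals, protecting flexible symbols), taking $C^2$ as the new constants, $(V^2,N^2)$ as the reachable $\Delta^2(C^2)$-structure and $(V^a,N^a)$ as the reachable $\Delta(C)$-structure; the required hypothesis $(V^2,N^2)\red_{\chi_2^C}\equiv(V^a,N^a)$ is exactly $(V^b,N^b)\equiv(V^a,N^a)$. The lemma then yields a $\chi_2^C$-expansion $(W^1,M^1)$ of $(V^a,N^a)$ with $(W^1,M^1)\equiv(V^2,N^2)$; hence $(W^1,M^1)\models\Phi^2$, and crucially $(W^1,M^1)\red_{\chi_2^C}=(V^a,N^a)$ on the nose. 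Note the asymmetry: no such repair is required on the $\chi_1$-side, which is exactly why $\chi_1$ need not protect flexible symbols.

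Finally, $(V^1,N^1)$ over $\Delta^1(C^1)$ and $(W^1,M^1)$ over $\Delta^2(C^2)$ now share the identical $\Delta(C)$-reduct $(V^a,N^a)$ (they have the same world-universe and agree on every symbol of $\Delta(C)$). Since the pushout $\Delta^1\stackrel{\upsilon_1}\to\Delta'\stackrel{\upsilon_2}\leftarrow\Delta^2$ lifts to a pushout $\Delta^1(C^1)\stackrel{\upsilon_1^C}\to\Delta'(C')\stackrel{\upsilon_2^C}\leftarrow\Delta^2(C^2)$ of the constant-enriched signatures, model amalgamation for this square produces a Kripke structure $(V',N')$ over $\Delta'(C')$ with $(V',N')\red_{\upsilon_1^C}=(V^1,N^1)$ and $(V',N')\red_{\upsilon_2^C}=(W^1,M^1)$. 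Reducing to $\Delta'$ and invoking the satisfaction condition once more, the $\Delta'$-structure $(V',N')\red_{\Delta'}$ satisfies $\upsilon_1(\Phi^1)\cup\upsilon_2(\Phi^2)$, which is therefore consistent. The decisive step, and the main obstacle, is the passage from elementary equivalence of the two base reducts to an exact common reduct; the Lifting Lemma resolves it precisely under the protection hypothesis on $\chi_2$, and this is the one place where the genuinely many-sorted character of the setting is used.
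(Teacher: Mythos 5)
Your proposal is correct and follows essentially the same route as the paper: obtain a model of $T_C$ that omits $\Gamma_\nom$, $\Gamma_{s^1}$, $\Gamma_{s^2}$ via the Omitting Types Theorem (Propositions~\ref{prop:cons}, \ref{prop:omit-n}, \ref{prop:omit-r}), use the Lifting Lemma~\ref{lemma:lifting} on the $\chi_2$-side to turn elementary equivalence of the two base reducts into an exact shared reduct, and then amalgamate over the pushout. The only cosmetic deviations are that you amalgamate at the level of the constant-enriched signatures $\Delta^1(C^1)$, $\Delta^2(C^2)$ and reduce to $\Delta'$ afterwards, whereas the paper first reduces to $\Delta^1$, $\Delta^2$ and amalgamates over the original pushout, and that you derive $(V^a,N^a)\equiv(V^b,N^b)$ directly from Lemma~\ref{lemma:bicond} (which in fact gives the equivalence over $\Delta(C)$ needed for the Lifting Lemma) rather than from the maximal consistency of $\Phi$.
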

\begin{figure}[h]\centering
\begin{tikzcd}
\Delta(C) \ar[rr,"\chi_1^C"] \ar[rd,"\chi_2^C "] & & \Delta^1(C^1) \ar[rd,dotted,"(\_)^{\pi_1}"] & \\ 
& \Delta^2(C^2)  \ar[rr,dotted,"(\_)^{\pi_2}", near start] & & \Delta^\diamond(C^\diamond) \\
& & & \\
\Delta \ar[uuu,hook] \ar[rr,"\chi_1" near start,dashed] \ar[rd,swap, "\chi_2"] & & \Delta^1 \ar[uuu,dashed,hook] \ar[dr,"\upsilon_1"]&\\ 
 & \Delta^2 \ar[uuu,hook] \ar[rr,"\upsilon_2"] & & \Delta'  \\
\end{tikzcd}
\caption{}
\end{figure}
\begin{proof}
By Proposition~\ref{prop:omit-n}, $T_C$ $\alpha$-omits $\Gamma_\nom$.
By Proposition~\ref{prop:omit-r},
$T_C$ $\alpha$-omits  $\Gamma_{s^1}$ and $\Gamma_{s^2}$ for all rigid sorts $s^1\in S^\rigid_1$ and $s^2\in S^\rigid_2$.
By ~\cite[Extended Omitting Types Theorem]{gai-hott}, 
there exists $(V,N)\in|\Mod(\Delta^\diamond(C^\diamond))|$ such that $(V,N)\models T_C$ and $(V,N)$ omits $\Gamma_\nom$, $\Gamma_{s^1}$ and $\Gamma_{s^2}$ for all rigid sorts $s^1\in S^\rigid_1$ and $s^2\in S^\rigid_2$.
Since $(V,N)\models \Phi^{\pi_1}\cup \Phi^{\pi_2}$, by satisfaction condition, 
$(V^1,N^1)\models \Phi^1$ and $(V^2,N^2)\models \Phi^2$.
Since $\Phi^1\models \chi_1(\Phi)$ and $\Phi^2\models \chi_2(\Phi)$, by satisfaction condition,
$(V^a,N^a)\models\Phi$ and $(V^b,N^b)\models\Phi$.
Since $\Phi$ is maximally consistent, 
$(V^a,N^a)\equiv(V^b,N^b)$.
Since $(V,N)$ omits $\Gamma_\nom$ and $\Gamma_{s^1}$ for all rigid sorts $s^1\in S^\rigid_1$,
$(V^1,N^1)$ is reachable by $C^1$.
Since $(V,N)$ omits $\Gamma_\nom$ and $\Gamma_{s^2}$ for all rigid sorts $s^2\in S^\rigid_2$, 
$(V^2,N^2)$ is reachable by $C^2$.
Since $(V^a,N^a)\equiv(V^b,N^b)$, $(V^a,N^a)$ is reachable by $C$ and $(V^2,N^2)$ is reachable by $C^2$,
by Lemma~\ref{lemma:lifting},
$(U^2,R^2)\equiv (V^2,N^2)$ for some $\chi_2^C$-expansion $(U^2,R^2)$ of $(V^a,N^a)$.
We define $(W^1,M^1)\coloneqq (V^1,N^1)\red_{\Delta^1}$ and $(W^2,M^2)\coloneqq (U^2,R^2)\red_{\Delta^2}$.
Since $(V^1,N^1)\models \Phi^1$ and $(U^2,R^2)\models \Phi^2$, 
by satisfaction condition,
$(W^1,M^1)\models \Phi^1$ and $(W^2,M^2)\models \Phi^2$.
Since $\Delta^1\stackrel{\upsilon_1}\to \Delta'\stackrel{\upsilon_2}\leftarrow\Delta^2$ is the pushout of $\Delta^1\stackrel{\chi_1}\leftarrow \Delta\stackrel{\chi_2}\to\Delta^2$ and $(W^1,M^1)\red_{\chi_1}=(W,M)=(W^2,M^2)\red_{\chi_2}$,
by \cite[Example 3.5]{dia-msc},
there exists a unique Kripke structure $(W',M')\in|\Mod(\Delta')|$ such that ${(W',M')\red_{\upsilon_1}}=(W^1,M^1)$ and $(W',M')\red_{\upsilon_2}=(W^2,M^2)$.
By satisfaction condition, $(W',M')\models \upsilon_1(\Phi^1)\cup \upsilon^2(\Phi^2)$.
\end{proof}

In many-sorted first-order logic, if one of the signature morphisms in the span is injective on sorts then the corresponding pushout is a CI square~\cite{DBLP:journals/sLogica/GainaP07}.
Lemma~\ref{lemma:counter-1} shows that in $\HFOL$, this condition is also necessary.
The following two examples focus on the second condition, the protection of flexible symbols.

\begin{example}\label{ex:counter-2}
Let
$\Delta^1\stackrel{\chi_1}\hookleftarrow \Delta\stackrel{\chi_2}\hookrightarrow\Delta^2$ 
be a span of inclusions such that
\begin{itemize}
\item $\Delta$ has one nominal $\{k\}$, one flexible sort $\{s\}$ and one constant $\{c:\to s\}$;
\item $\Delta^1$ has nominals $\{k,k_1\}$,
one rigid sort $\{s\}$ and one flexible constant $\{c:\to s\}$; 
\item $\Delta^2$ has two nominals $\{k,k_2\}$,
one flexible sort $s$, and two flexible constants $\{c:\to s, c_2:\to s\}$.
\end{itemize}
Let $\Delta^1\stackrel{\upsilon_1}\hookrightarrow \Delta' \stackrel{\upsilon_2}\hookleftarrow\Delta^2$ be a pushout of the above span such that
\begin{itemize}
\item $\Delta'$ consists of three nominals $\{k,k_1,k_2\}$,
one rigid sort $\{s\}$, and 
two flexible constants $\{c:\to s, c_2:\to s\}$.
\end{itemize}
\end{example}

In Example~\ref{ex:counter-2}, the signature morphism $\chi_2$ adds a new constant $c_2:\to s$ on the flexible sort $s$, which means that flexible symbols are not protected. 
The following lemma shows that the pushout constructed above is not a CI square.

\begin{lemma}\label{lemma:counter-2}
The pushout described in Example~\ref{ex:counter-2} is not a CI square.
\end{lemma}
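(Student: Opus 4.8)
The plan is to mimic the proof of Lemma~\ref{lemma:counter-1}, exploiting the fact that $\chi_2$ introduces the flexible constant $c_2$ which is invisible from $\Delta$. I would take
\[
\Phi^1\coloneqq\{\Forall{\{x,y\}} x=y\}\subseteq\Sen(\Delta^1)\quad\text{and}\quad \Phi^2\coloneqq\{\at{k}(c=c_2)\}\subseteq\Sen(\Delta^2),
\]
where $x,y$ are variables of the sort $s$, which is rigid in $\Delta^1$ (so the quantification is legal). First I would verify the premise $\upsilon_1(\Phi^1)\models\upsilon_2(\Phi^2)$. Since $s$ is rigid in $\Delta'$, any $(W',M')\models\upsilon_1(\Phi^1)$ has a world-independent carrier $M'_s$ with at most one element; as carriers are non-empty, $M'_s$ is a singleton, so $M'_{W'_k,c}=M'_{W'_k,c_2}$ and $(W',M')\models\at{k}(c=c_2)$. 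As $\upsilon_2$ fixes $k,c,c_2$, this means $(W',M')\models\upsilon_2(\Phi^2)$.

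Next, towards a contradiction I would assume a $\Delta$-interpolant $\Phi$ with $\Phi^1\models\chi_1(\Phi)$ and $\chi_2(\Phi)\models\Phi^2$, and construct a $\Delta^2$-model of $\chi_2(\Phi)$ refuting $\Phi^2$. Because $\Phi^1$ is consistent (it has a singleton model $(W^1,M^1)$) and $\Phi^1\models\chi_1(\Phi)$, the local satisfaction condition (Proposition~\ref{prop:sat-cond}) gives a $\Delta$-model $(W,M)\coloneqq(W^1,M^1)\red_{\chi_1}\models\Phi$ whose $s$-carrier at $W_k$ is a singleton $\{d\}$ with $M_{W_k,c}=d$. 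Now I would enlarge this model: let $(V,N)$ be the $\Delta$-model obtained from $(W,M)$ by adjoining one fresh element $e$, so that $N_{W_k,s}=\{d,e\}$, keeping all nominals, worlds and the interpretation of $c$ unchanged. Since $s$ is flexible in $\Delta$ and $\Delta$-sentences neither quantify over $s$ nor contain a term denoting $e$, the same induction that proves Lemma~\ref{lemma:reach-equiv} yields $(V,N)\equiv(W,M)$, whence $(V,N)\models\Phi$.

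Finally I would expand $(V,N)$ along the inclusion $\chi_2$ to a $\Delta^2$-model $(U,R)$ by interpreting the new nominal $k_2$ arbitrarily and setting $R_{W_k,c_2}\coloneqq e$. Then $(U,R)\red_{\chi_2}=(V,N)\models\Phi$, so $(U,R)\models\chi_2(\Phi)$ by the satisfaction condition, whereas $R_{W_k,c}=d\neq e=R_{W_k,c_2}$ gives $(U,R)\not\models\at{k}(c=c_2)$, i.e. $(U,R)\not\models\Phi^2$. This contradicts $\chi_2(\Phi)\models\Phi^2$, so no interpolant exists and the square is not a CI square. I expect the main obstacle to be the enlargement step: one must argue that a $\Delta$-sentence cannot pin down the cardinality of the flexible carrier of $s$ (as Lemma~\ref{lemma:reach-equiv} is stated for cardinality-preserving replacements, I would reprove the relevant invariance directly). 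This is precisely where the failure of protection of flexible symbols—the freely roaming new constant $c_2$—is decisive.
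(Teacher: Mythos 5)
Your proposal is correct and follows essentially the same route as the paper's proof: the paper takes $\Phi_1=\{\Forall{y}c=y\}$ and $\Phi_2=\{c_2=c\}$, builds the same one-world singleton model of $\Phi_1$, passes to its $\Delta$-reduct, adds an unreachable element to the flexible carrier of $s$, and expands to $\Delta^2$ with $c_2$ naming that new element, exactly as you do. Your only deviation is cosmetic (using $\Forall{\{x,y\}}x=y$ and $\at{k}(c=c_2)$ instead), and your worry about Lemma~\ref{lemma:reach-equiv} is unnecessary: the paper itself invokes that lemma for the element-adding step, reading ``replacing unreachable elements by some new elements'' as allowing the set of unreachable flexible elements to change cardinality, since sentences cannot quantify over flexible sorts.
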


\begin{proof}
Let $\Phi_1\coloneqq\{\Forall{y}c=y\in\Sen(\Delta^1)\}$ and 
$\Phi_2\coloneqq\{c_2=c\in\Sen(\Delta^2)\}$.
It is straightforward to show $\Phi_1\models \Phi_2$.
Suppose towards a contradiction that there exists $\Phi\subseteq\Sen(\Delta)$ such that $\Phi_1\models\Phi$ and $\Phi\models\Phi_2$.

Let $(W^1,M^1)$ be the Kripke structure over $\Delta^1$ that consists of 
one possible  world $w$, which means that $W^1_k=W^1_{k_1}=w$ and
$M^1_w$ is the single-sorted algebra consisting of one element $M^1_{w,s}=\{e\}$, 
which means that $M^1_{w,c}=e$.
Obviously, $(W^1,M^1)\models \Forall{x}x=c$.
By the satisfaction condition, 
$(W^1,M^1)\red_\Delta\models \Phi$.
Let $(W,M)$ be the Kripke structure over $\Delta$ obtained from $(W^1,M^1)\red_\Delta$ by adding a new flexible element $d$ of sort $s$.
Since $d$ is an unreachable element, by Lemma~\ref{lemma:reach-equiv}, $(W,M)\equiv(W^1,M^1)\red_\Delta$.
It follows that $(W,M)\models\Phi$.
Let $(W^2,M^2)$ be the expansion of $(W,M)$ to $\Delta^2$ which interprets $c_2:\to s$ as $d$.
By the satisfaction condition, $(W^2,M^2)\models\Phi$.
Since $\Phi\models \Phi_2$, we have $(W^2,M^2)\models \Phi_2$, 
which is a contradiction, as $M^2_{w,c}=e\neq d = M^2_{w,c_2}$.
\end{proof}

We give another example of pushout which is not a CI square.
\begin{example}\label{ex:counter-3}
Let
$\Delta^1\stackrel{\chi_1}\hookleftarrow \Delta\stackrel{\chi_2}\hookrightarrow\Delta^2$ 
be a span of inclusions such that
\begin{itemize}
\item~$\Delta$ has one nominal $\{k\}$, 
one flexible sort $\{Nat\}$ and  
two flexible function symbols $\{0:\to Nat,succ:Nat \to Nat\}$;
\item~$\Delta^1$ has two nominals $\{k,k_1\}$,
one rigid sort $\{Nat\}$ and
three flexible function symbols $\{0: \to Nat, succ:Nat \to Nat, \texttt{\_+\_}: Nat~Nat\to Nat \}$;
\item~$\Delta^2$ has two nominals $\{k,k_2\}$,
two rigid sorts $\{Nat,List\}$, 
four flexible operations
$\{0:\to Nat, succ:Nat \to Nat, nil:\to List, \texttt{\_|\_}: Nat~List\to List\}$.
\end{itemize}
Let $\Delta^1\stackrel{\upsilon_1}\hookrightarrow \Delta' \stackrel{\upsilon_2}\hookleftarrow\Delta^2$ be a pushout of the above span such that
\begin{itemize}
\item $\Delta'$ has three nominals $\{k,k_1,k_2\}$,
two rigid sorts $\{Nat,List\}$ and 
five flexible function symbols 
$\{0:\to Nat, succ:Nat\to Nat, \texttt{\_+\_}: Nat~Nat\to Nat, nil:\to List, cons:Nat~List\to List\}$.
\end{itemize}
\end{example}

In Example~\ref{ex:counter-3}, the signature morphism $\chi_2$ does not preserve the flexible sort $Nat$.
\begin{lemma}\label{lemma:counter-3}
The pushout described in Example~\ref{ex:counter-3} is not a CI square.
\end{lemma}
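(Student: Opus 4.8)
The plan is to mimic the refutation used in Lemmas~\ref{lemma:counter-1} and~\ref{lemma:counter-2}, the essential new feature being that the sort $Nat$ is flexible in $\Delta$ but rigid in both $\Delta^1$ and $\Delta^2$. Because $Nat$ is rigid in $\Delta^1$ and in $\Delta^2$, the sentence $\theta \coloneqq \Forall{x}(x = 0 \vee \Exists{y}(x = succ(y)))$, with $x,y$ of sort $Nat$, is a well-formed sentence over each of these two signatures, whereas over $\Delta$ it is \emph{not}, since there $Nat$ is flexible and quantification is permitted only over rigid sorts. I would set $\Phi_1 \coloneqq \{\theta\}$ and $\Phi_2 \coloneqq \{\theta\}$. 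As all four morphisms are inclusions and $\theta$ is built solely from the shared symbols $0, succ$ and from $Nat$-quantification, the translations $\upsilon_1(\theta)$ and $\upsilon_2(\theta)$ are literally the same $\Delta'$-sentence, so $\upsilon_1(\Phi_1)\models\upsilon_2(\Phi_2)$ holds trivially; it remains to refute the existence of a $\Delta$-interpolant.

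Suppose, towards a contradiction, that $\Phi\subseteq\Sen(\Delta)$ satisfies $\Phi_1\models\chi_1(\Phi)$ and $\chi_2(\Phi)\models\Phi_2$. Following Lemma~\ref{lemma:counter-2}, let $(W^1,M^1)$ be the single-world $\Delta^1$-structure whose unique world $w$ carries the standard naturals, i.e. $M^1_{w,Nat}=\mathbb{N}$ with $0$, $succ$ and the addition symbol interpreted as usual and $W^1_k=W^1_{k_1}=w$; then $(W^1,M^1)\models\Phi_1$, so by $\Phi_1\models\chi_1(\Phi)$ and the satisfaction condition $(W^1,M^1)\red_\Delta\models\Phi$. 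Form $(W,M)$ from $(W^1,M^1)\red_\Delta$ by adjoining a single fresh element $d$ to the carrier of $Nat$ and putting $succ(d)=0$. Every element of $\mathbb{N}$ is the value of some term $succ^{n}(0)$, so $d$ is unreachable, and Lemma~\ref{lemma:reach-equiv} gives $(W,M)\equiv(W^1,M^1)\red_\Delta$; hence $(W,M)\models\Phi$. Finally, expand $(W,M)$ to a $\Delta^2$-structure $(W^2,M^2)$ by interpreting the list layer arbitrarily (say a one-point carrier for $List$ with $nil$ and $cons$ constant) and setting $W^2_{k_2}=w$, so that $(W^2,M^2)\red_\Delta=(W,M)$. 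By the satisfaction condition $(W^2,M^2)\models\chi_2(\Phi)$, whence $(W^2,M^2)\models\theta$ since $\chi_2(\Phi)\models\Phi_2$. But $d$ is neither $0$ nor a value of $succ$ (the original successor maps $\mathbb{N}$ into $\mathbb{N}$, and $succ(d)=0\neq d$), so $\theta$ fails at $x=d$ -- the desired contradiction.

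The step I expect to require care is the last expansion: since $\chi_2$ converts the flexible sort $Nat$ into a rigid one, a faithful $\Delta^2$-expansion must interpret $Nat$ \emph{rigidly}, and one must check that enlarging its carrier to $\mathbb{N}\cup\{d\}$ is admissible while keeping $d$ unreachable. Restricting attention to single-world Kripke structures, as above, disposes of this issue, because over one world the rigidity requirement ${M_{w_1}\red_{\Sigma^\rigid}}={M_{w_2}\red_{\Sigma^\rigid}}$ is vacuous and the carrier may be freely enlarged. Conceptually, this is precisely the place where the hypothesis that $\chi_2$ preserves flexible sorts is used in Theorem~\ref{th:robinson}: flexibility of $Nat$ in $\Delta$ simultaneously prevents any quantified $Nat$-formula like $\theta$ from serving as an interpolant and, via Lemma~\ref{lemma:reach-equiv}, permits the unreachable witness $d$ to be grafted on without changing the $\Delta$-theory; interpolation fails exactly on this discrepancy.
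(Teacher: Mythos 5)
Your proof is correct and takes essentially the same route as the paper's: both arguments exploit that $Nat$-quantification is expressible in $\Delta^1$ and $\Delta^2$ (where $Nat$ is rigid) but not in $\Delta$ (where it is flexible), push the putative interpolant $\Phi$ down to $\Delta$ via the satisfaction condition, adjoin an unreachable $Nat$-element using Lemma~\ref{lemma:reach-equiv}, and then expand to $\Delta^2$ to falsify $\Phi^2$. The differences are only in the concrete instantiation — the paper takes two worlds carrying $\mathbb{Z}_2$ with $\Phi^2=\{\Forall{x}succ(succ(x))=x\}$ and witness $\widehat{2}$, while you take one world carrying $\mathbb{N}$ with the ``zero-or-successor'' sentence and witness $d$.
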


\begin{proof}
Let $\Phi^1$ be the set of $\Delta^1$-sentences which consists of 
\begin{itemize}
\item $\Forall{{x:Nat}}succ(succ(x))=x$, 
\item $\Forall{{x:Nat}}0 + x = x$, and
\item $\Forall{{x:Nat},{y:Nat}} succ(y) + x = succ(y + x)$.
\end{itemize}
Let $\Phi^2\coloneqq \{\Forall{x:Nat}succ(succ(x))=x \}$. 
Suppose towards a contradiction that there exists a set of $\Delta$-sentences $\Phi$ such that 
$\Phi^1\models\Phi$ and $\Phi\models\Phi^2$.

Let $(W^1,M^1)$ be a Kripke structure over $\Delta^1$ that consists of 
two possible worlds $\{w_1,w_2\}$,
both $M^1_{w_1}$ and $M^1_{w_2}$ are the quotient algebra $\mathbb{Z}_2$.
Obviously, $(W^1,M^1)\models \Phi^1$.
Since $\Phi^1\models \Phi$, $(W^1,M^1)\models \Phi$.
By the satisfaction condition, $(W^1,M^1)\red_\Delta\models\Phi$.
Let $(W,M)$ obtained by adding a new element $\widehat{2}$ of sort $Nat$ such that $M_{w_i,succ}(\widehat{2})=\widehat{0}$ for each $i\in\{1,2\}$.
Since $\widehat{2}$ is an unreachable element, by Lemma~\ref{lemma:reach-equiv}, $(W,M)\equiv(W^1,M^1)\red_\Delta$.
Let $(W^2,M^2)$ be the expansion of $(W,M)$ to $\Delta^2$ which interprets 
$List$ in both worlds as the set of all lists with elements from $\{\widehat{0},\widehat{1},\widehat{2}\}$.
By the satisfaction condition, $(W^2,M^2)\models \Phi$.
Since $M_{w_1,succ} (M_{w_1,succ}(\widehat{2})) = \widehat{1}$, 
we have $(W^2,M^2)\not \models \Phi^2$, 
which contradicts $\Phi\models\Phi^2$.
\end{proof}
\section{Conclusions}
Lemma~\ref{lemma:counter-2} and Lemma~\ref{lemma:counter-3} show that not only injectivity on sorts but also  protection of flexible symbols is necessary for interpolation in $\HFOL$.

Recall that $\RFOHL$ signatures form a subcategory of $\HFOL$ signatures. 
It is not difficult to check that the subcategory of $\RFOHL$ signatures is closed under pushouts.
It follows that Theorem~\ref{th:robinson} is applicable to $\RFOHL$.
Since intersection-union square of signature morphism are, in particular, pushouts, 
our results cover the ones obtained in \cite{ArecesBM03}.

Similarly, $\HPL$ signatures form a subcategory of $\HFOL$ signatures. 
It is not difficult to check that the subcategory of $\HPL$ signatures is closed under pushouts.
It follows that Theorem~\ref{th:robinson} is applicable to $\HPL$.
Since Theorem~\ref{th:robinson} is derived from Omitting Types Theorem, 
our results rely on quantification over possible worlds.
Therefore, the present work does not cover the interpolation result from \cite{ArecesBM01}, 
which is applicable to hybrid propositional logic without quantification.

$\HFOL$ and $\HFOLS$ have the same signatures and Kripke structures.
By \cite[Lemma 2.20]{gai-godel}, $\HFOLS$ and $\HFOL$ have the same expressivity power.
The relationship between $\HFOL$ and $\HFOLS$ is similar to the relationship between first-order logic and unnested first-order logic, which allows only terms of depth one~\cite{DBLP:books/daglib/0080659}.
Therefore, a square of $\HFOL$ signature morphisms is a CI square in $\HFOL$ iff 
it is a CI square in $\HFOLS$.

Proposition~\ref{def:rob} does not give an effective construction of an interpolant, which is an open problem.

\bibliographystyle{aiml22}
\bibliography{aiml22}

\end{document}